\definecolor{gray75}{gray}{0.75}
\newcommand{\hsp}{\hspace{20pt}}
\titleformat{\chapter}[hang]{\Huge\bfseries}{\thechapter\hsp\textcolor{gray75}{|}\hsp}{0pt}{\Huge\bfseries}
\newcounter{Results}[section]
\newtheorem{theorem}[Results]{Theorem}
\newtheorem{lemma}[Results]{Lemma}
\newtheorem{proposition}[Results]{Proposition}
\newtheorem{corollary}[Results]{Corollary}
\newtheorem{conjecture}[Results]{Conjecture}
\theoremstyle{remark}
\newtheorem{remark}[Results]{Remark}
\theoremstyle{definition}
\newtheorem{definition}[Results]{Definition}
\newcommand{\Z}{\ensuremath{\mathbb Z}}
\newcommand{\Q}{\ensuremath{\mathbb Q}}
\newcommand{\C}{\ensuremath{\mathbb C}}
\newcommand{\F}[1]{\ensuremath{\mathbb F_{#1}}}
\newcommand{\Oc}{\ensuremath{\mathscr{O}}}
\newcommand{\Tr}{\text{Tr}}
\newcommand{\Gal}{\text{Gal}}
\newcommand{\Cl}{\text{Cl}}
\newcommand{\im}{\text{im}}
\newcommand{\coker}{\text{coker}}
\title{Tame Galois module structure revisited}
\author{\textbf{Fabio Ferri} \\University of Exeter\\ \\\textbf{Cornelius Greither} \\ Universität der Bundeswehr München}
\begin{document}

\maketitle

\begin{abstract}
A number field $K$ is Hilbert-Speiser if all of its tame abelian extensions $L/K$
admit NIB (normal integral basis). It is known that $\Q$ is the only such field,
but when we restrict $\Gal(L/K)$ to be a given group $G$, the classification
of $G$-Hilbert-Speiser fields is far from complete. In this paper, we present
new results on so-called $G$-Leopoldt fields. In their definition, NIB is
replaced by ``weak NIB'' (defined below). Most of our results are negative,
in the sense that they strongly limit the class of $G$-Leopoldt fields
for some particular groups $G$, sometimes even leading to an
exhaustive list of such fields or at least to a finiteness result. In particular we are able to 
correct a small oversight in a recent article by Ichimura concerning Hilbert-Speiser fields.
\end{abstract}

\section{Introduction}

In the last decades a lot of work has been done 
on so-called Hilbert-Speiser fields; in particular, criteria were developed which
insure that a given field is not Hilbert-Speiser. The framework is Galois module structure of extensions of number fields.
More precisely the object of our study is the property of an extension of
number fields to have a \textit{normal integral basis}: an extension $L/K$ of number fields admits a NIB if $\Oc_L$ is a rank $1$ free
 $\Oc_K[G]$-module, where we denote by $\Oc_L$ and $\Oc_K$ the corresponding rings of integers. The Hilbert-Speiser theorem states that for abelian number fields tameness is equivalent to the existence of a normal integral basis over $\Q$, while 
 in general we only have that an extension with NIB has to be tame. 
By definition
a number field $K$ is Hilbert-Speiser if every tame abelian extension $L/K$ has NIB,
 and $K$ is $C_l$-Hilbert-Speiser if every such extension with Galois group 
isomorphic to $C_l$, the cyclic group of prime order $l$, has NIB.
 
 The first contribution to the topic of Hilbert-Speiser fields
came from the important result contained in \cite{GRRS99}: 
$\Q$ is the only Hilbert-Speiser field,
i.e. for every number field $K\supsetneq \Q$ there exists a (cyclic of prime order) tame abelian extension that does not have NIB. Subsequent research went towards
the finer problem of finding criteria for $C_l$-Hilbert-Speiser fields. For instance, in \cite{Car03}, \cite{Ich04} and \cite{Yos09}
we find some conditions for abelian $C_2$ and $C_3$-Hilbert-Speiser
fields. On the other hand, we know from \cite{Her05} and \cite{GJ09}  
that for arbitrary $l$, the prime $l$ cannot be
too highly ramified in $C_l$-Hilbert-Speiser fields.

In this work, we intend to consider a weakened version of NIB: 
we will say that a tame abelian extension $L/K$ of number fields has
a \textit{weak normal integral basis} (WNIB for short)
if $\mathcal{M}\otimes_{\Oc_K[G]}\Oc_L$ is free of rank $1$ over $\mathcal{M}$, where $\mathcal{M}$ is the maximal order of $K[G]$.
WNIB's have been studied for instance in \cite{Gre90}, \cite{Gre97} and \cite{GJ12}.
We shall ask the same
questions as above, substituting ``WNIB'' for ``NIB'' everywhere, and a number
field $K$ ``Leopoldt field'' (instead of ``Hilbert-Speiser field'')
if every tame abelian extension $L/K$ has WNIB.
We are mainly going to establish necessary conditions for number fields 
to be $C_l$-Leopoldt, where as before $l$ is a prime number. This
also gives criteria and (sometimes conditional) finiteness results 
for $C_l$-Hilbert-Speiser fields; for instance we will see that this permits to 
correct an oversight in the article \cite{Ich16}, whose techniques, 
even though they were originally
conceived to deal with Hilbert-Speiser fields, turn out to be supple enough 
to be applied to Leopoldt fields as well. 
Ichimura informs us that the corrected proof of Proposition 2 of \cite{Ich16},
to appear in a corrigendum, will proceed by a slightly different
argument Ichimura already knew at that time, but which finally was not
used in the paper  \cite{Ich16}. 

First of all we introduce some notation:

\begin{itemize}
 \item $\Cl(\Lambda)$ denotes the locally free class group $\Lambda$, where 
 $\Lambda$ is an order over a Dedekind domain (see for instance \cite[$\S$39A or $\S$49]{CR87})
 \item $h_K$ is the class number of the number field $K$
 \item $h_K^-$ is the relative class number of the CM-field $K$
 \item $h_l^-:=h_{\Q(\zeta_l)}^-$
 \item $h(d):=h_{\Q(\sqrt{d})}$
 \item $N_{L/K}$ stands for the norm map, defined either on the fractional ideals or directly between the ideal class groups of the number fields; namely the mutiplicative extension of the map $\mathcal{P}\mapsto\wp^{f(\mathcal{P}|\wp)}$ where $\mathcal{P}|\wp$ is an extension of prime ideals
 \item $H_K$ is the Hilbert class field of $K$
\end{itemize}

In the remainder of this introduction,
let us recall the concepts and formulate the questions we will
be dealing with in this paper, and review a few standard results
which are important in the sequel.

\begin{definition}
 Let $L/K$ be a tame $G$-Galois abelian extension of number fields, and let $\mathcal{M}$ be the maximal order of $K[G]$. The extension has a weak normal integral basis 
 (WNIB)
 if $\mathcal{M}\otimes_{\Oc_K[G]}\Oc_L$ is free of rank $1$ over $\mathcal{M}$.
\end{definition}

Without any essential changes to the classical proofs on normal integral bases, we have the following
\begin{proposition}\label{disjointwnib}
 Let $L/K$ be an abelian tame extension of number field and $F$ an intermediate field. If $L/K$ has a WNIB, so does $F/K$. 
 
 Let $L_1/K$ and $L_2/K$ be arithmetically disjoint extensions of number fields with $L_1/K$ tame abelian. If $L_1/K$ has WNIB, so does $L/L_2$, where $L=L_1L_2$.
\end{proposition}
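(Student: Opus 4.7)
The plan is to adapt the classical proofs for NIB to the WNIB setting by working one level up, i.e.\ over the maximal order rather than over the group algebra itself.

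For the first assertion, set $H=\Gal(L/F)$ and use the Wedderburn-type product decomposition $\mathcal{M}=\prod_{[\chi]}\Oc_{K(\chi)}$, where $[\chi]$ runs over $\Gal(\overline{K}/K)$-orbits of characters of $G$ and $K(\chi)$ is generated by the values of $\chi$. The maximal order $\mathcal{M}'$ of $K[G/H]$ is canonically the direct factor of $\mathcal{M}$ indexed by those $[\chi]$ with $\chi|_H=1$; equivalently, writing $e_H=\frac{1}{|H|}\sum_{h\in H}h\in K[G]$, one has $e_H\mathcal{M}=\mathcal{M}'$. In particular $\mathcal{M}'$ is at once a quotient and a direct summand of $\mathcal{M}$. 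Tensoring the WNIB hypothesis $\mathcal{M}\otimes_{\Oc_K[G]}\Oc_L\cong\mathcal{M}$ over $\mathcal{M}$ with $\mathcal{M}'$ thus gives $\mathcal{M}'\otimes_{\Oc_K[G]}\Oc_L\cong\mathcal{M}'$. It remains to identify this with $\mathcal{M}'\otimes_{\Oc_K[G/H]}\Oc_F$ via the natural map induced by $\Oc_F\hookrightarrow\Oc_L$; that this map is an isomorphism follows because both sides are locally free $\mathcal{M}'$-modules of rank one, by Noether's theorem on local freeness of $\Oc_L$ over $\Oc_K[G]$ and of $\Oc_F$ over $\Oc_K[G/H]$ in the tame case. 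This is essentially the classical computation $\Oc_L^H=\Oc_K[G/H]\cdot\Tr_{L/F}(\alpha)$ transported to the level of maximal orders.

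For the second assertion, I would follow the classical blueprint directly. Arithmetic disjointness of $L_1/K$ and $L_2/K$ gives $\Oc_L=\Oc_{L_1}\otimes_{\Oc_K}\Oc_{L_2}$ and $\Gal(L/L_2)\cong G$, so tensoring a free $\mathcal{M}$-module generator of $\mathcal{M}\otimes_{\Oc_K[G]}\Oc_{L_1}$ with $\Oc_{L_2}$ yields a free generator of the analogous module for $L/L_2$. The one auxiliary input needed is that $\Oc_{L_2}\otimes_{\Oc_K}\mathcal{M}$ is exactly the maximal order of $L_2[G]$; this holds because $\mathcal{M}$ and $\Oc_K[G]$ differ only at primes of $\Oc_K$ dividing $|G|$, and such primes are unramified in $L_2/K$ by arithmetic disjointness, so base-changing to $\Oc_{L_2}$ cannot create a larger order.

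The main obstacle in both parts is precisely these base-change identifications---decomposing $\mathcal{M}$ compatibly with the quotient $G\to G/H$ in part one, and preservation of maximality of the order under disjoint base change in part two. Once they are in place, the freeness assertions transport formally from the hypothesis, consistent with the authors' observation that ``no essential changes'' to the classical NIB arguments are required.
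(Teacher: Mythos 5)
The paper itself offers no proof of this proposition --- it is stated as following from the classical NIB arguments ``without any essential changes'' --- so your proposal can only be measured against the classical blueprint it invokes; the architecture is certainly the intended one, but two of your justifications do not hold up. In the first part, the map induced by the inclusion $\Oc_F\hookrightarrow\Oc_L$ is \emph{not} an isomorphism from $\mathcal{M}'\otimes_{\Oc_K[G/H]}\Oc_F$ to $\mathcal{M}'\otimes_{\Oc_K[G]}\Oc_L$: locally at $\wp$, where $\Oc_{L,\wp}=\Oc_{K,\wp}[G]\alpha$ and $\Oc_{F,\wp}=\Oc_{K,\wp}[G/H]\Tr_{L/F}(\alpha)$, it sends the generator $1\otimes\Tr_{L/F}(\alpha)$ to $\sum_{h\in H}\bar{h}\otimes\alpha=|H|\,(1\otimes\alpha)$, since $H$ dies in $\mathcal{M}'$; so it fails to be surjective at every prime dividing $|H|$ (already for $K=F=\Q$ and $L=\Q(\sqrt{d})$ with $d\equiv1\bmod 4$ the cokernel is $\Z/2$). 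Moreover ``both sides are locally free of rank one'' can never by itself force a given map to be an isomorphism, nor even force the two modules to be abstractly isomorphic --- they could a priori lie in different classes of $\Cl(\mathcal{M}')$, which is exactly the point at issue. The correct map goes the other way: $\Tr_{L/F}\colon\Oc_L\to\Oc_F$ is $\Oc_K[G]$-linear (with $G$ acting on $\Oc_F$ through $G/H$) and carries local generator to local generator, hence induces an isomorphism $\mathcal{M}'\otimes_{\Oc_K[G]}\Oc_L\to\mathcal{M}'\otimes_{\Oc_K[G/H]}\Oc_F$. You quote precisely the computation $\Oc_L^H=\Oc_K[G/H]\cdot\Tr_{L/F}(\alpha)$ at the end, so the repair is within reach, but as written the key identification is unjustified.

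In the second part the auxiliary claim is false: arithmetic disjointness of $L_1/K$ and $L_2/K$ does not make $L_2/K$ unramified at primes dividing $|G|$, because tameness of $L_1/K$ only excludes wild ramification in $L_1$ and does not force $L_1$ --- hence, via coprimality of discriminants, $L_2$ --- to avoid those primes. Concretely, take $G=C_3$, $L_1$ the cyclic cubic field of conductor $7$ and $L_2=\Q(\zeta_3)$: these are arithmetically disjoint over $\Q$, yet the factor $\Z[\zeta_3]\otimes_{\Z}\Z[\zeta_3]\cong\Z[\zeta_3][x]/(x^2+x+1)$ of $\Oc_{L_2}\otimes_{\Z}\mathcal{M}$ has discriminant $(-3)$ over $\Z[\zeta_3]$ and is a proper suborder of the corresponding factor $\Z[\zeta_3]\times\Z[\zeta_3]$ of the maximal order of $L_2[G]$. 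Fortunately the claim is not needed: $\Oc_{L_2}\otimes_{\Oc_K}\mathcal{M}$ is in any case an order of $L_2[G]$ containing $\Oc_{L_2}[G]$, your tensor argument shows $(\Oc_{L_2}\otimes_{\Oc_K}\mathcal{M})\otimes_{\Oc_{L_2}[G]}\Oc_L$ is free over it, and one further extension of scalars to the true maximal order of $L_2[G]$ preserves freeness. With these two corrections the proof is sound.
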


We recall that there is the following surjection, with kernel denoted by $D(\Oc_K[G])$:
\begin{equation*}
 0\longrightarrow D(\Oc_K[G])\longrightarrow \textup{\Cl}(\Oc_K[G])\longrightarrow\textup{\Cl}(\mathcal{M})\longrightarrow 0.
\end{equation*}

\begin{definition}
 Let $K$ be a number field. We say that $K$ is Leopoldt if every tame abelian extension $L/K$ has WNIB, and
 $G$-Leopoldt fields if every tame abelian extension $L/K$ with Galois group isomorphic to $G$ has WNIB.
\end{definition}

\begin{remark}
  We note that other authors (see for example \cite{Car07} and \cite{BCGJ11}) employ the term
  ``Leopoldt field'' in a different (and stricter) sense, so that we briefly
  considered calling our definition ``weakly Leopoldt'';
  but we have decided against it for simplicity.
\end{remark}

As we stated in the abstract, $\Q$ is the only Hilbert-Speiser field. One may ask if this is still true in the weaker sense:
\begin{conjecture}
 $\Q$ is the only Leopoldt field.
\end{conjecture}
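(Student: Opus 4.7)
The plan is to show, for every number field $K\supsetneq\Q$, that there exists a prime $l$ and a tame $C_l$-extension $L/K$ without WNIB. Since the WNIB obstruction lives in $\Cl(\mathcal{M})$, and for $G=C_l$ the maximal order $\mathcal{M}$ of $K[C_l]$ splits as $\Oc_K\times\Oc'$, with $\Oc'$ the maximal order of the ``nontrivial'' direct summand (essentially sitting inside $K(\zeta_l)$), the question reduces to producing a tame $C_l$-extension whose associated class in $\Cl(\Oc')$ is nonzero.

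First, one would adapt McCulloh's description of realisable classes from $\Cl(\Oc_K[G])$ to $\Cl(\mathcal{M})$. Passing up to $K(\zeta_l)$ turns the classification of tame $C_l$-extensions into a Kummer problem, and one parametrises them by elements of a ray class group of $K(\zeta_l)$ modulo $l$-th powers; the associated class in $\Cl(\Oc')$ is the image under a Stickelberger-type map combined with Galois descent. Call the resulting subgroup of realisable classes $R(K,l)$; then $K$ is $C_l$-Leopoldt precisely when $R(K,l)=0$, so it suffices to find a single prime $l$ with $R(K,l)\neq 0$.

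Second, one exploits the arithmetic of $K$: by Minkowski $K/\Q$ is ramified at some prime $p$, and for a suitably chosen $l\neq p$ one constructs Kummer data supported at primes of $K(\zeta_l)$ above $p$ whose image in $\Cl(\Oc')$ can be shown to be nontrivial, using the fact that the Stickelberger-type descent is, generically, nonzero on ramification-supported classes. If in addition $h_K>1$, the Hilbert class field $H_K$ (or its cyclic subextensions of $l$-power order for appropriate $l$) provides extra flexibility for producing such classes. Proposition~\ref{disjointwnib} then allows one to move between $K$, intermediate fields, and compositums so that an obstruction found at one level propagates.

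The main obstacle will be making this argument uniform in $K$. For a fixed $l$ the Stickelberger-type relations may accidentally kill all candidate classes — exactly as happens on the Hilbert-Speiser side for special $K$ — and one must then vary $l$. Controlling $R(K,l)$ as $l$ varies ultimately amounts to controlling $l$-parts of $h^-_{K(\zeta_l)}$ (and of appropriate ray class groups), which is inaccessible with current technology. This is presumably why the statement must remain a conjecture: particular fields $K$ can be disposed of by ad hoc choices of $l$ (as is done in several places later in the paper), but no general mechanism guaranteeing a working $l$ for every $K\supsetneq\Q$ is available.
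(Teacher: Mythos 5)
The statement you were asked to prove is stated in the paper as a \emph{conjecture}, and the paper offers no proof of it; indeed, immediately after stating it the authors explain why the one known proof template fails: the argument of \cite{GRRS99} for Hilbert--Speiser fields rests on Swan modules, whose classes live in the kernel group $D(\Oc_K[G])$, and that subgroup is precisely what is killed when one passes from $\Cl(\Oc_K[G])$ to $\Cl(\mathcal{M})$. So nothing from the Swan-module machinery survives in the Leopoldt setting, and the authors instead retreat to studying $C_l$-Leopoldt fields case by case. Your proposal is therefore not being measured against a proof in the paper, because there is none.

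As for your text itself: it is a strategy sketch, not a proof, and you say so yourself in the final paragraph. The reduction in your first two paragraphs is essentially correct and matches the paper's setup --- by McCulloh's theorem the realizable classes in $\Cl(\mathcal{M})$ are exactly $\Cl^0(\mathcal{M})^{\mathcal{J}}$, so $K$ fails to be $C_l$-Leopoldt if and only if $\mathcal{J}$ does not annihilate $\Cl(K(\zeta_l))\times\cdots\times\Cl(K(\zeta_l))$ (Corollary \ref{leocond}). But the decisive step in your third paragraph --- that for a suitable $l$ one can exhibit ``ramification-supported'' Kummer data whose descent to $\Cl(\mathcal{M})$ is nonzero --- is asserted, not argued, and it is exactly the point where the problem is open: whether $\mathcal{J}$ annihilates $\Cl(K(\zeta_l))$ is a statement about Stickelberger annihilation of class groups of the fields $K(\zeta_l)$ as $l$ varies, and no current technique controls this uniformly in $K$. (Note also that ramification of $K/\Q$ at $p$ does not by itself produce a nontrivial class in $\Cl(\mathcal{M})$; tame extensions constructed from such data can perfectly well land in the kernel group.) Your honest concluding paragraph identifies the right obstruction, but it also concedes that the argument cannot be completed; the correct response to this exercise is that the statement is a conjecture, consistent with the partial results ($C_2$, $C_3$, and the $C_l$ criteria of Sections \ref{real}--\ref{intersection}) that the paper actually proves.
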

However, there is no chance to adapt the proof of \cite{GRRS99}. A fundamental ingredient is the study of objects related to
Swan modules, which live in the kernel group, and 
this approach cannot work in our context as we are interested in a quotient by the kernel group (that is the class group of the maximal order).
Instead we study $C_l$-Leopoldt fields directly.

In two short sections we will deal with the simple cases $l=2$ and $l=3$,
for the sake of completeness.  In Section \ref{real}
we develop necessary conditions for totally real fields that intersect $\Q(\zeta_l)$ trivially
to be $C_l$-Leopoldt. In Section \ref{hilbert-speiser} we
prove that the list of real abelian $C_l$-Hilbert-Speiser fields for certain values of $l$ given in \cite{Ich16} is actually complete.
The final Section \ref{intersection} presents some results concerning
fields that may intersect $\Q(\zeta_l)$ nontrivially.

In the rest of the introduction we review some basic tools.
We can firstly use McCulloh's important theorem \cite{McC83} to speak about realizable classes. Let us recall it.
First of all, the augmentation map $\varepsilon:\Oc_K[G]\rightarrow \Oc_K$
defines a map
\begin{equation*}\begin{split}
 \varepsilon_*:\Cl(\Oc_K[G])&\longrightarrow \Cl(\Oc_K)\\
 [M]&\longmapsto[\Oc_K\otimes_{\Oc_K[G]}M],\end{split}
\end{equation*}
where by $[M]$ we mean the class of $M$ in the locally free class group.
We define $\Cl^0(\Oc_K[G])$ to be the kernel of the above homomorphism. When $G$ is elementary abelian, we want to give the group $\Cl(\Oc_K[G])$ a structure of 
$\Z[\Delta]$-module, where $\Delta$ is the multiplicative group of the finite field whose additive structure is isomorphic to $G$:
if $G\cong \F{l^n}^+$, then $\Delta:=\F{l^n}^*$. This 
is simply done noting that each element of $\Delta$ defines a ring automorphism of $\Oc_K[G]$ and hence a functorial endomorphism of $\Cl(\Oc_K[G])$.
Now let
 \begin{equation*}
  \theta=\frac{1}{l}\sum_{g\in \Delta}t_{\F{l^n}/\F{l}}(g)g^{-1}\in\Q[\Delta]
 \end{equation*}
 be the Stickelberger element, where $t_{\F{l^n}/\F{l}}$ is the composition of the trace $\Tr_{\F{l^n}/\F{l}}$ with the integer representative of the elements of
 $\F{l}\cong\Z/l\Z$ between $0$ and $l-1$, and
 \begin{equation*}
  \mathcal{J}=\Z[\Delta]\theta\cap\Z[\Delta]\subseteq\Z[\Delta]
 \end{equation*}
 the Stickelberger ideal.
 
 \begin{theorem}[McCulloh \cite{McC83}]\label{mcculloh}
 Let $K$ be a number field and $G$ an elementary abelian group. Then
 \begin{equation*}
  R(\Oc_K[G])=\textup{\Cl}^0(\Oc_K[G])^{\mathcal{J}},
 \end{equation*}
 where we denote by $R(\Oc_K[G])$ the ``realizable classes'' in $\textup{\Cl}(\Oc_K[G])$, i.e. given by the tame Galois extensions of $K$ with Galois group $G$.
\end{theorem}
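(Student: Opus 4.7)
The plan is to prove the two inclusions separately via Fröhlich's Hom-description of the locally free class group. Recall that $\Cl(\Oc_K[G])$ can be identified with a quotient of $\Hom_{\Omega_K}(R_G, J(\bar K)^*)$ modulo the image of the unit ideles and of determinants of units of $\Oc_K[G]$, where $R_G$ is the group of virtual characters of $G$ and $\Omega_K = \Gal(\bar K/K)$. Since $G$ is elementary abelian, $R_G$ may be replaced by $\Z[\hat G]$, and the natural action of $\Delta = \F{l^n}^*$ on $G$ (and hence on $\hat G$) dualises the $\Delta$-action on $\Cl(\Oc_K[G])$ described in the text. The Stickelberger element $\theta$ then acts through this dual action.

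For the inclusion $R(\Oc_K[G]) \subseteq \Cl^0(\Oc_K[G])^{\mathcal{J}}$, I would start by verifying that a realizable class lies in $\Cl^0$: if $L/K$ is tame $G$-Galois, then the trace map surjects $\Oc_L \twoheadrightarrow \Oc_K$, giving $\Oc_K \otimes_{\Oc_K[G]} \Oc_L \cong \Oc_K$. The deeper content is that the Hom-description image of $[\Oc_L]$ is annihilated (modulo units) by anything in $\mathcal{J}$. This is done using \emph{resolvends}: for a local normal basis generator $\alpha$, form $r_G(\alpha) = \sum_{g \in G} g(\alpha)\,g^{-1}$, and observe that the character-evaluated resolvend $\chi(r_G(\alpha))$ is essentially a Lagrange resolvent, hence a Gauss sum attached to the Kummer character cutting out a subextension. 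The classical Stickelberger theorem on prime factorisations of Gauss sums then tells us precisely that any element of $\Z[\Delta]$ annihilating $\theta$ modulo $\Z[\Delta]$ — i.e. any element of $\mathcal{J}$ — kills the corresponding idele class.

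For the reverse inclusion, given a class $c \in \Cl^0(\Oc_K[G])^{\mathcal{J}}$ represented by an idele-valued homomorphism $f \colon \hat G \to J(\bar K)^*$, I would construct a tame $G$-extension $L/K$ realizing $c$. The strategy is to use approximation to replace $f$ (modulo the unit group) by a homomorphism whose local components at finite primes are unramified except at a carefully chosen finite set $S$ of tame primes, then invoke Kummer-type constructions prime-by-prime: at each $\wp \in S$ choose a tamely ramified character of $G_{K_\wp}$ whose resolvend matches the local component of $f$, glue these via class field theory into a global tame $G$-extension $L$, and verify using the recipe in the first inclusion that $[\Oc_L]$ maps back to $c$. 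The Stickelberger-fixed hypothesis is used precisely at the gluing step to ensure that the local Gauss-sum obstructions cancel globally.

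The principal obstacle is the reverse inclusion, and specifically pinning down that the Stickelberger ideal $\mathcal{J}$ — and not some larger or smaller ideal — exactly captures the obstructions. This requires a careful analysis of the local integral structure of resolvends at primes dividing $l$ (even though the extensions in question are tame there only when $l \nmid |G|$, which is automatic here since $G$ is elementary abelian of exponent $l$). One needs the precise match between the integrality of modified Gauss sums, given by Stickelberger's theorem in the form $\theta \cdot (\text{Gauss sum}) \in \Z[\Delta]$, and the realizability of local classes, and then a global patching argument, which is where approximation and the triviality of the augmentation class come into play.
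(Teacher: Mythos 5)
This statement is not proved in the paper at all: it is McCulloh's theorem, imported wholesale from \cite{McC83}, so there is no in-paper argument to compare against. Your outline does correctly identify the broad strategy of McCulloh's original proof (Fr\"ohlich's Hom-description, resolvends $r_G(\alpha)=\sum_g g(\alpha)g^{-1}$ interpreted as Lagrange resolvents/Gauss sums, Stickelberger's theorem for the inclusion $R(\Oc_K[G])\subseteq \Cl^0(\Oc_K[G])^{\mathcal{J}}$, and an approximation-plus-local-construction argument for the reverse inclusion). But as it stands it is a plan, not a proof: the entire reverse inclusion --- which is the hard direction and occupies most of McCulloh's paper --- is reduced to ``choose tamely ramified characters, glue via class field theory, verify,'' with no indication of how one controls the image of the constructed class in the Hom-description, nor of why the obstruction ideal is exactly $\mathcal{J}$ rather than something larger or smaller.

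There are also two concrete slips. First, you describe $\mathcal{J}$ as the set of elements of $\Z[\Delta]$ that ``annihilate $\theta$ modulo $\Z[\Delta]$''; that set is the auxiliary ideal $\mathcal{J}'=\{\alpha:\alpha\theta\in\Z[\Delta]\}$, whereas the Stickelberger ideal of the paper is $\mathcal{J}=\Z[\Delta]\theta\cap\Z[\Delta]=\mathcal{J}'\theta$ --- the distinction matters, since the theorem asserts annihilation by $\mathcal{J}$ and not by $\mathcal{J}'$. Second, the parenthetical claim that tameness at primes above $l$ is ``automatic here since $G$ is elementary abelian of exponent $l$'' is backwards: $l$ divides $|G|$ in this setting, so tameness above $l$ is a genuine hypothesis on $L/K$, not a consequence of the group structure, and the local analysis at those primes is precisely where care is needed. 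If you intend this as a citation of McCulloh's result (as the paper does), say so and stop; if you intend to reprove it, the reverse inclusion needs to be carried out in full.
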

Using a cancellation property for projective modules
(which can easily be established over commutative rings), we obtain:
\begin{corollary}
 A number field $K$ is $G$-Hilbert-Speiser with $G$ elementary abelian if and only if the Stickelberger ideal $\mathcal{J}$ annihilates $\textup{\Cl}^0(\Oc_K[G])$.
\end{corollary}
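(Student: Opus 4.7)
The plan is to translate the $G$-Hilbert-Speiser condition into the vanishing of the set of realizable classes, and then invoke Theorem \ref{mcculloh}. First I would recall that for every tame Galois extension $L/K$ with group $G$, Noether's theorem guarantees that $\Oc_L$ is locally free of rank one over $\Oc_K[G]$; hence the existence of a NIB amounts to $\Oc_L \cong \Oc_K[G]$ as $\Oc_K[G]$-modules. The cancellation property mentioned in the excerpt just before the corollary is what allows us to replace this isomorphism by the weaker statement $[\Oc_L]=0$ in $\Cl(\Oc_K[G])$: over a commutative Noetherian ring (here $\Oc_K[G]$), triviality of the class of a locally free rank one module is equivalent to freeness. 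Consequently, $K$ is $G$-Hilbert-Speiser if and only if $R(\Oc_K[G]) = 0$.

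Once this reformulation is in place, the corollary is essentially an unwinding of McCulloh's theorem. Indeed, by Theorem \ref{mcculloh} we have $R(\Oc_K[G]) = \Cl^0(\Oc_K[G])^{\mathcal{J}}$, and writing the abelian group $\Cl^0(\Oc_K[G])$ additively, the right-hand side is exactly the submodule $\mathcal{J}\cdot \Cl^0(\Oc_K[G])$ generated by the $\mathcal{J}$-action on $\Cl^0(\Oc_K[G])$. Its vanishing is by definition the statement that $\mathcal{J}$ annihilates $\Cl^0(\Oc_K[G])$, and this yields both implications simultaneously: if $\mathcal{J}$ annihilates $\Cl^0(\Oc_K[G])$, then $R(\Oc_K[G])=0$, whence every tame $G$-extension has NIB; conversely, if $K$ is $G$-Hilbert-Speiser, then $R(\Oc_K[G])=0$ forces $\mathcal{J}\cdot \Cl^0(\Oc_K[G])=0$.

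The only delicate step is the cancellation argument identifying triviality in $\Cl(\Oc_K[G])$ with genuine freeness. This is classical for rank one locally free modules over a commutative Noetherian ring of finite Krull dimension, for instance via Bass cancellation, or by observing that on rank one classes $\Cl(R)$ agrees with the Picard group $\mathrm{Pic}(R)$, in which triviality of $[M]$ means $M \cong R$. Everything else is a formal manipulation combining this observation with McCulloh's theorem.
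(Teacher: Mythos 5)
Your argument is correct and matches the paper's intended proof: the paper derives the corollary directly from Theorem \ref{mcculloh} together with the same cancellation principle (stably free rank-one locally free modules over the commutative ring $\Oc_K[G]$ are free), which you justify via the determinant/Picard-group observation. Nothing essential differs.
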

This is of course the main ingredient for the research on Hilbert-Speiser fields that has been done.

Now let us return to our to our main focus of studying Leopoldt fields.
As before we can take $G$ to be any elementary abelian group and obtain
\begin{equation*}
 R(\mathcal{M}):=\left\{ \left[\mathcal{M}\otimes_{\Oc_K[G]}\Oc_L \right]:L/K \text{ } G \text{-tame}\right\}=\im\left(\Cl^0(\Oc_K[G])^{\mathcal{J}}\right)=
 \im\left(\Cl^0(\Oc_K[G])\right)^{\mathcal{J}}
\end{equation*}
(the action of $\Delta$ commutes with the map $\textup{\Cl}(\Oc_K[G])\rightarrow\textup{\Cl}(\mathcal{M})$). 
The extension of scalars to the maximal order does not change the augmentation, 
so from the surjectivity
we obtain that
\begin{equation*}
 R(\mathcal{M})=\Cl^0(\mathcal{M})^{\mathcal{J}}.
\end{equation*}

We recall that we know what the maximal order $\mathcal{M}$ is. If $K\times K(\zeta_l)\times\cdots\times K(\zeta_l)$ is the Wedderburn decomposition of
$K[G]$, then $\mathcal{M}$ is simply $\Oc_K\times \Oc_{K(\zeta_l)}\times\cdots\times \Oc_{K(\zeta_l)}$ and thus the part annihilated by
augmentation is $\Oc_{K(\zeta_l)}\times\cdots\times \Oc_{K(\zeta_l)}$.
Therefore we get the following
\begin{corollary}\label{leocond}
  Let $K$ be a number field and $G$ an $l$-elementary abelian group. Then $K$ is $G$-Leopoldt if and only if the Stickelberger ideal $\mathcal{J}\subseteq\Z[\Delta]$ 
  annihilates $\textup{\Cl}(K(\zeta_l))\times\cdots\times\textup{\Cl}(K(\zeta_l))$.
\end{corollary}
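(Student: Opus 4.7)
The plan is to assemble the three ingredients built up immediately before the statement; no genuinely new argument is needed.

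First, I would translate the definition of ``$K$ is $G$-Leopoldt'' into a statement about classes in $\Cl(\mathcal{M})$. Since $\mathcal{M} \cong \Oc_K \times \Oc_{K(\zeta_l)} \times \cdots \times \Oc_{K(\zeta_l)}$ is a commutative product of Dedekind rings, a locally free $\mathcal{M}$-module of rank one is free precisely when its class in $\Cl(\mathcal{M})$ vanishes (the same cancellation observation that was used in the Hilbert--Speiser corollary). Hence $K$ is $G$-Leopoldt if and only if $R(\mathcal{M}) = 0$. Substituting the identity $R(\mathcal{M}) = \Cl^0(\mathcal{M})^{\mathcal{J}}$ established in the paragraph just before the statement, this is equivalent to $\mathcal{J}$ annihilating $\Cl^0(\mathcal{M})$.

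Finally, the Wedderburn description identifies the augmentation-kernel direct factor of $\mathcal{M}$ with $\Oc_{K(\zeta_l)} \times \cdots \times \Oc_{K(\zeta_l)}$, and taking class groups coordinatewise produces the isomorphism
\begin{equation*}
\Cl^0(\mathcal{M}) \;\cong\; \Cl(K(\zeta_l)) \times \cdots \times \Cl(K(\zeta_l)),
\end{equation*}
which combined with the preceding step yields the stated equivalence. The only point that calls for any verification is that the $\Z[\Delta]$-structure on the right-hand side (in which $\Delta$ may permute the copies according to its action on the Galois orbits of non-trivial characters of $G$) agrees with the functorial $\Z[\Delta]$-structure inherited by $\Cl^0(\mathcal{M})$ from $\Cl(\Oc_K[G])$; this is the most delicate step in an otherwise routine bookkeeping argument.
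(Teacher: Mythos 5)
Your proposal is correct and follows essentially the same route as the paper: the paper likewise derives the corollary from the identity $R(\mathcal{M})=\Cl^0(\mathcal{M})^{\mathcal{J}}$, the cancellation property over the commutative maximal order, and the Wedderburn description of the augmentation-kernel factor as $\Oc_{K(\zeta_l)}\times\cdots\times\Oc_{K(\zeta_l)}$. The compatibility of the $\Z[\Delta]$-action that you flag as the delicate point is exactly what the paper addresses immediately afterwards in Remark \ref{leoremark}.
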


\begin{remark}\label{leoremark}
 Assume $G=C_l$ (so that $\Delta\cong (\Z/l\Z)^*$) and let $K$ be a number field. When $K$ is disjoint from $\Q(\zeta_l)$, there is a simple consequence of the above
 corollary: $K$ is $C_l$-Leopoldt if and only if $\mathcal{J}$ annihilates $\Cl(K(\zeta_l))$, where the module structure of $\Cl(K(\zeta_l))$ over $\Z[\Delta]$
 is the classical one from Galois theory.
 
 More generally, let $H\cong\Gal(K(\zeta_l)/K)$ be a proper subgroup of $\Delta=\{\sigma_1,...,\sigma_{l-1}\}$. We can identify
 $H$ with the subset of the $\sigma_j$'s such that there exists an automorphism of $\Gal(K(\zeta_l)/K)$ which sends $\zeta_l\mapsto \zeta_l^j$. 
 Looking at the structure of $K[G]\cong K[z]/(z^l-1)$, the set of the $(l-1)/d$ components of $\Cl^0(\mathcal{M})$, which are equal to $\Cl(K(\zeta_l))$, where $d=|H|$,
 is permuted via a free action of $\Delta/H$, while the module structure of $\Cl(K(\zeta_l))$ over $\Z[H]$ given by restriction is the classical one,
 namely with the identification $H\cong \Gal(K(\zeta_l)/K)$.
 
 Hence if $K$ is $C_l$-Leopoldt then $\pi_H(\mathcal{J})$ annihilates $\Cl(K(\zeta_l))$, where $\pi_H:\Z[\Delta]\rightarrow \Z[H]$ is the map
 that only conserves the coefficients of the elements of $H$.
 One can easily verify that $\pi_H(\mathcal{J})$ is an ideal of $\Z[H]$ (although $\pi_H$ is not a ring homomorphism). 
 The work by Ichimura and Sumida-Takahashi on the image of the Stickelberger ideal through this map will turn out to be very useful
 for our purposes.
\end{remark}

\section{$C_2$-Leopoldt fields}
In this situation for every field $K$ the component of the maximal order in $K[C_2]$ annihilated by the augmentation (let us call it zero-component) is $\Oc_K$. We need to know the Stickelberger ideal of $\Z$, but
in this case $\theta=\frac{1}{2}$
and so $1\in\mathcal{J}$. Therefore Corollary \ref{leocond} tells us that the class group has to be trivial and we have the following

\begin{proposition}
 A number field $K$ is $C_2$-Leopoldt if and only if $h_K=1$.
\end{proposition}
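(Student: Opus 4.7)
The plan is to derive this directly from Corollary \ref{leocond} by computing both ingredients explicitly for $l=2$. First I would note that for $G=C_2$ we have $\Delta=\F{2}^*=\{1\}$, so $\Z[\Delta]=\Z$, and there is only one component of the maximal order annihilated by augmentation. Since $\zeta_2=-1\in K$, this component is simply $\Oc_K$, and the class group appearing in Corollary \ref{leocond} is $\Cl(\Oc_K)$.

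Next I would compute the Stickelberger ideal. The Stickelberger element is
\begin{equation*}
\theta=\tfrac{1}{2}\sum_{g\in\Delta}t_{\F{2}/\F{2}}(g)\,g^{-1}=\tfrac{1}{2},
\end{equation*}
since the only element of $\Delta$ is $1$, and the integer representative of $1\in\F{2}$ is $1$. Hence $\mathcal{J}=\Z\cdot\tfrac{1}{2}\cap\Z=\Z$, and in particular $1\in\mathcal{J}$. Plugging into Corollary \ref{leocond}, the $C_2$-Leopoldt condition becomes that $\mathcal{J}=\Z$ annihilates $\Cl(K)$, which holds if and only if $\Cl(K)$ is trivial, i.e., $h_K=1$.

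This is essentially a routine unwinding of definitions and the main work has already been done in establishing Corollary \ref{leocond}, so there is no serious obstacle; the only point that requires a moment's attention is confirming that the Stickelberger ideal indeed equals $\Z$ (as $\theta=1/2$ would at first glance suggest $\mathcal{J}\subsetneq\Z$). One observes that $2\cdot\tfrac12=1\in\Z$, so $1\in\Z\theta\cap\Z=\mathcal{J}$, resolving this point immediately.
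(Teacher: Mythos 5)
Your proof is correct and follows essentially the same route as the paper: identify the zero-component of the maximal order as $\Oc_K$, observe that $\theta=\tfrac12$ so $1=2\theta\in\mathcal{J}$, and conclude via Corollary \ref{leocond} that the condition is exactly $h_K=1$. The only difference is that you spell out the verification $\mathcal{J}=\Z\cdot\tfrac12\cap\Z=\Z$ in slightly more detail than the paper does.
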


\section{$C_3$-Leopoldt fields}
Here we have to study the Stickelberger ideal of $\Z[C_2]$. But this is the unit ideal as before, because $1=(2j-1)\theta$.

If $\sqrt{-3}\in K$, then the zero-component of the maximal order in $K[C_3]$ is $\Oc_K\times \Oc_K$, so as before we have:

\begin{proposition}
 A number field $K$ with $\sqrt{-3}\in K$ is $C_3$-Leopoldt if and only if $h_K=1$.
\end{proposition}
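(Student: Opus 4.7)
The plan is to reduce this to a direct application of Corollary \ref{leocond} together with the Stickelberger ideal computation already established at the top of this section.

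First I would note that the hypothesis $\sqrt{-3}\in K$ forces $\zeta_3\in K$, because $\zeta_3=(-1+\sqrt{-3})/2$. Consequently $K(\zeta_3)=K$, and the Wedderburn decomposition of $K[C_3]$ splits completely as $K\times K\times K$ (via sending a generator of $C_3$ to $(1,\zeta_3,\zeta_3^2)$). The maximal order is therefore $\mathcal{M}=\Oc_K\times\Oc_K\times\Oc_K$, and its augmentation-killed component is $\Oc_K\times\Oc_K$, as the paper already indicates. Passing to class groups, the relevant module in Corollary \ref{leocond} is $\Cl(\Oc_K)\times\Cl(\Oc_K)$.

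Next I would invoke the fact recorded at the beginning of the section that, for $l=3$, the Stickelberger ideal $\mathcal{J}\subseteq\Z[\Delta]=\Z[C_2]$ equals the full ring $\Z[\Delta]$, since $1=(2j-1)\theta$ where $j$ denotes the nontrivial element of $\Delta$. Applying Corollary \ref{leocond}, the field $K$ is $C_3$-Leopoldt if and only if $\mathcal{J}=\Z[\Delta]$ annihilates $\Cl(K)\times\Cl(K)$. Since $1\in\mathcal{J}$, this annihilation condition is equivalent to $\Cl(K)=0$, i.e. $h_K=1$. The converse is immediate, because a trivial class group is annihilated by any ideal.

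There is no real obstacle in this proof: both the ingredients (the structure of the maximal order when $\zeta_l\in K$, and the triviality of the Stickelberger ideal for $l=3$) are already in place, and the argument is essentially a bookkeeping exercise matching these ingredients against Corollary \ref{leocond}. The only minor point worth spelling out is the identification $K(\zeta_3)=K$, which is what distinguishes this case from the complementary one (fields $K$ with $\sqrt{-3}\notin K$) that presumably requires a more delicate analysis involving $\pi_H(\mathcal{J})$ as in Remark \ref{leoremark}.
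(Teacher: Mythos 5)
Your argument is correct and coincides with the paper's own (implicit) reasoning: the zero-component of the maximal order is $\Oc_K\times\Oc_K$ when $\zeta_3\in K$, the identity $1=(2j-1)\theta$ shows $\mathcal{J}=\Z[\Delta]$, and Corollary \ref{leocond} then reduces the Leopoldt condition to $h_K=1$. No issues.
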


In the other case, the zero-component of the maximal order is necessarily $\Oc_{K(\sqrt{-3})}$, so we obtain the following general proposition:

\begin{proposition}
A number field $K$ is $C_3$-Leopoldt if and only if $h_{K(\sqrt{-3})}=1$.
\end{proposition}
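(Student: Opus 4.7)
The plan is to reduce to Corollary \ref{leocond} and to reuse the Stickelberger calculation already performed for the preceding proposition. First observe that when $\sqrt{-3}\in K$ we have $K(\sqrt{-3})=K$, so $h_{K(\sqrt{-3})}=h_K$ and the statement reduces to the previous proposition. The substantive case is therefore $\sqrt{-3}\notin K$, so assume this from now on.

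Under this assumption $K$ and $\Q(\zeta_3)$ are linearly disjoint, and Remark \ref{leoremark} applies directly: $K$ is $C_3$-Leopoldt if and only if the Stickelberger ideal $\mathcal{J}\subseteq\Z[\Delta]$ (with $\Delta=(\Z/3\Z)^*\cong C_2$) annihilates $\Cl(K(\zeta_3))=\Cl(K(\sqrt{-3}))$, endowed with its natural Galois module structure via the identification $\Delta\cong\Gal(K(\sqrt{-3})/K)$. Equivalently, the Wedderburn decomposition $K[C_3]\cong K\times K(\sqrt{-3})$ shows that the non-augmentation component of the maximal order is just $\Oc_{K(\sqrt{-3})}$, so Corollary \ref{leocond} yields the same conclusion.

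Now I would invoke the calculation already carried out at the start of the $C_3$-Leopoldt section: the Stickelberger ideal of $\Z[C_2]$ associated with $l=3$ is the unit ideal, since $(2j-1)\theta=1$. Hence $\mathcal{J}$ annihilates $\Cl(K(\sqrt{-3}))$ precisely when $\Cl(K(\sqrt{-3}))$ itself is trivial, i.e., $h_{K(\sqrt{-3})}=1$, finishing the proof. There is really no hard step here; the only mild subtlety is to make sure that in the disjoint case the $\Z[\Delta]$-module action postulated by Corollary \ref{leocond} matches the usual Galois action on $\Cl(K(\sqrt{-3}))$, which is exactly the content of Remark \ref{leoremark}.
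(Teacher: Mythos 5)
Your proposal is correct and follows essentially the same route as the paper: split into the cases $\sqrt{-3}\in K$ (where the statement reduces to the preceding proposition) and $\sqrt{-3}\notin K$ (where the zero-component of the maximal order is $\Oc_{K(\sqrt{-3})}$ and Corollary \ref{leocond} applies), then use that $1=(2j-1)\theta$ makes the Stickelberger ideal of $\Z[\Delta]$ the unit ideal, so annihilation forces $h_{K(\sqrt{-3})}=1$. The paper states this almost without proof, and your write-up just makes the same steps explicit.
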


When $K$ is an abelian extension $K(\sqrt{-3})$ is an imaginary abelian extension of $\Q$, and from Yamamura \cite{Yam94} we know the imaginary abelian number fields 
with class number $1$.

\begin{corollary}
 The abelian Leopoldt fields of the type $C_3$ are finite in number.
\end{corollary}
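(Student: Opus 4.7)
The plan is to combine the preceding proposition with Yamamura's classification, which is already cited in the paper. By the preceding proposition, an abelian field $K$ is $C_3$-Leopoldt if and only if $h_{K(\sqrt{-3})}=1$, so it suffices to show that only finitely many abelian $K$ satisfy this condition.

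First I would observe that if $K/\Q$ is abelian, then so is $K(\sqrt{-3})/\Q$, being the compositum of two abelian extensions of $\Q$. Moreover, since $\sqrt{-3}\in K(\sqrt{-3})$, the field $K(\sqrt{-3})$ has no real embedding and is therefore an imaginary abelian number field. Hence every abelian $C_3$-Leopoldt field $K$ appears as a subfield of some imaginary abelian number field $F:=K(\sqrt{-3})$ with $h_F=1$.

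By Yamamura's theorem \cite{Yam94}, there are only finitely many imaginary abelian number fields with class number $1$. This yields finitely many possibilities for $F$, and since each $F$ has only finitely many subfields, only finitely many possibilities remain for $K$ itself.

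There is no real obstacle here; the only mild point to verify is that $K(\sqrt{-3})$ is indeed imaginary even in the case when $K$ itself is non-real (which follows immediately from the presence of $\sqrt{-3}$ as an element of the field). Hence the corollary follows at once from the previous proposition and Yamamura's finiteness result.
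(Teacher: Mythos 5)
Your argument is correct and is exactly the one the paper intends: you combine the preceding proposition ($K$ is $C_3$-Leopoldt iff $h_{K(\sqrt{-3})}=1$) with the observation that $K(\sqrt{-3})$ is an imaginary abelian field and with Yamamura's finiteness of imaginary abelian fields of class number $1$. The only addition you make is the (correct and worth stating) remark that each of the finitely many fields $F=K(\sqrt{-3})$ has only finitely many subfields $K$.
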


\begin{remark}
 Taking $K$ general, looking at the Hilbert class field of $K$ we conclude that $h_K$ can be $1$ or $2$. We cannot say that 
 $h_K=1$, like the $C_2$ case: for example we know from \cite{Yam94} that $\Q(\sqrt{3},\sqrt{-7},\sqrt{-3})$ has class number $1$, while $\Q(\sqrt{3},\sqrt{-7})$ does not
 (and so the class number is $2$). However the conclusion holds whenever $K$ is not ramified in $3$ or it is not totally imaginary, because there is total ramification (in 
 primes over $3$ or
 the real archimedean ones) and so $h_K=1$ by \cite[Theorem 10.1]{Was97}.
\end{remark}

There is one more situation in which we have a finiteness result on $C_3$-Leopoldt fields. Using \cite{Odl75}, which states that there are
finitely many normal CM-fields with relative
class number $1$, we have:
\begin{corollary}
 There is only a finite number of normal and totally real fields which are $C_3$-Leopoldt.
 The same holds for normal CM-fields which are $C_3$-Leopoldt.
\end{corollary}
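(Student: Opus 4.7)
The plan is to combine the previous proposition (which says $K$ is $C_3$-Leopoldt iff $h_{K(\sqrt{-3})}=1$) with the Odlyzko finiteness result cited just above. The key intermediate fact to establish is that, in both cases of the statement, the field $M:=K(\sqrt{-3})$ is itself a normal CM-field.

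First I would handle the normal totally real case. Since $K/\Q$ and $\Q(\sqrt{-3})/\Q$ are both normal, the compositum $M$ is normal over $\Q$. As $K\subset\R$ and $\sqrt{-3}\notin K$, we have $[M:K]=2$ and $M$ is totally imaginary with maximal totally real subfield $M^+ = K$. So $M$ is a normal CM-field, and $K$ is recovered from $M$ uniquely as $M^+$.

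Next I would handle the normal CM case. If $\sqrt{-3}\in K$ there is nothing to verify: $M=K$ is already a normal CM-field. Otherwise, writing $K=K^+(\sqrt{-\alpha})$ with $\alpha\in K^+$ totally positive, one has
\[
M=K^+(\sqrt{-\alpha},\sqrt{-3})=K^+\bigl(\sqrt{3\alpha},\sqrt{-\alpha}\bigr),
\]
and since $3\alpha$ is totally positive, $K^+(\sqrt{3\alpha})$ is totally real. Thus $M$ is a totally imaginary quadratic extension of this field, and is normal as a compositum of two normal extensions of $\Q$; so $M$ is again a normal CM-field. Here $K$ is one of the finitely many subfields of $M$, so the number of $K$ is at most a constant multiple of the number of admissible $M$.

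To conclude, in every situation $h_M=1$ forces the relative class number $h_M^-=1$ (since $h_M^{+}\mid h_M$ for any CM-field). Odlyzko's theorem~\cite{Odl75} then gives only finitely many candidates for $M$, and each $M$ determines only finitely many $K$ by the previous paragraphs. The only real subtlety I expect is bookkeeping the CM subcase where $\sqrt{-3}\notin K$: one must exhibit the correct totally real subfield of $M$, which the decomposition $M=K^+(\sqrt{3\alpha},\sqrt{-\alpha})$ provides transparently.
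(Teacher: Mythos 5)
Your proposal is correct and follows exactly the route the paper intends: the corollary is stated as an immediate consequence of the preceding proposition ($K$ is $C_3$-Leopoldt iff $h_{K(\sqrt{-3})}=1$) together with Odlyzko's finiteness theorem for normal CM-fields with relative class number $1$, and your verification that $K(\sqrt{-3})$ is a normal CM-field in both cases (including the decomposition $K^+(\sqrt{3\alpha},\sqrt{-\alpha})$ in the CM subcase) supplies precisely the details the paper leaves implicit.
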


If we assume the generalized Riemann hypothesis, by \cite{AD03} we find a finite number of totally real or CM-fields as well. That article will be very useful
again at a later point in the present paper.

\section{Totally real $C_l$-Leopoldt fields}\label{real}

We define $\Delta_l:=\Gal(\Q(\zeta_l)/\Q)\cong (\Z/l\Z)^*$, with the elements $\{\sigma_1,\sigma_2,...,\sigma_{l-1}\}$, and 
$\theta_l$, $\mathcal{J}_l$ respectively
the Stickelberger element and ideal in $\Q[\Delta_l]$.

We can state a general result 
that holds when a number field $K$ is disjoint from $\Q(\zeta_l)$, using Iwasawa's class number formula \cite{Iwa62}
in a similar way to what happens in \cite{Gre97}.

\begin{proposition}
 If $K$ is a $C_l$-Leopoldt number field disjoint from $\Q(\zeta_l)$, denote 
 by $J\in\textup{\Gal} (K(\zeta_l)/K)$ the automorphism corresponding to the complex conjugation
 $\sigma_{l-1}\in\textup{\Gal}(\Q(\zeta_l)/\Q)$. Then 
 the exponent of 
 $\textup{\Cl}(K(\zeta_l))^{1-J}$ divides $h^-_l$.
\end{proposition}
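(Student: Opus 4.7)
The plan is to combine Remark \ref{leoremark} with Iwasawa's classical index formula for the Stickelberger ideal. First I would invoke Remark \ref{leoremark} in its easy case: since $K$ is disjoint from $\Q(\zeta_l)$, one has $\Gal(K(\zeta_l)/K) \cong \Delta_l$, and the $C_l$-Leopoldt hypothesis means precisely that the Stickelberger ideal $\mathcal{J}_l \subseteq \Z[\Delta_l]$ annihilates $\Cl(K(\zeta_l))$ as a $\Z[\Delta_l]$-module.

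Next I would pass to the ``minus part''. Set $M := \Cl(K(\zeta_l))^{1-J}$, the image of the endomorphism $1-J$. Since $J$ corresponds to the element of order $2$ in $\Delta_l$, we have $J^2 = 1$ and therefore $(1+J)(1-J) = 0$ in $\Z[\Delta_l]$; consequently $(1+J)$ annihilates $M$, and the $\Z[\Delta_l]$-action on $M$ factors through $R^- := \Z[\Delta_l]/(1+J)\Z[\Delta_l]$. Writing $\mathcal{J}_l^-$ for the image of $\mathcal{J}_l$ in $R^-$, the previous paragraph yields $\mathcal{J}_l^- \cdot M = 0$.

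The claim now reduces to showing that the integer $h_l^-$ lies in $\mathcal{J}_l^-$. This is exactly Iwasawa's class number formula \cite{Iwa62}, which says $[R^- : \mathcal{J}_l^-] = h_l^-$; in particular $h_l^- \cdot R^- \subseteq \mathcal{J}_l^-$, so the image of $h_l^- \cdot 1$ lies in $\mathcal{J}_l^-$. Lifting this relation back to $\Z[\Delta_l]$ we can write $h_l^- = \alpha + (1+J)\beta$ with $\alpha \in \mathcal{J}_l$ and $\beta \in \Z[\Delta_l]$; both summands annihilate $M$ by the first and second paragraphs respectively, so $h_l^- \cdot M = 0$ and the exponent of $\Cl(K(\zeta_l))^{1-J}$ divides $h_l^-$, as required. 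The only slightly delicate point is keeping track of the convention for the minus part of $\Z[\Delta_l]$ and the precise form in which the Iwasawa index formula is applied, but this is standard in cyclotomic class number theory and does not pose a genuine obstacle.
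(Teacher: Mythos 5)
Your overall strategy is the same as the paper's: reduce, via Remark \ref{leoremark} (equivalently Corollary \ref{leocond}), to the statement that $\mathcal{J}_l$ annihilates $\Cl(K(\zeta_l))$, and then feed in Iwasawa's index formula for the minus part of the Stickelberger ideal. However, the point you dismiss at the end as a harmless matter of convention is exactly where the argument breaks. Iwasawa's theorem computes the index of $\mathcal{J}_l^-:=\mathcal{J}_l\cap\Z[\Delta_l]^-$ inside the \emph{submodule} $\Z[\Delta_l]^-=\ker(1+J)=\im(1-J)$, and that index is $h_l^-$. You instead work in the \emph{quotient} $R^-=\Z[\Delta_l]/(1+J)\Z[\Delta_l]$ and assert that the image of $\mathcal{J}_l$ has index $h_l^-$ there. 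Under the isomorphism $\Z[\Delta_l]/(1+J)\Z[\Delta_l]\cong\im(1-J)$ induced by multiplication by $1-J$, the image of $\mathcal{J}_l$ corresponds to $(1-J)\mathcal{J}_l$, which sits inside $\mathcal{J}_l\cap\im(1-J)$ with index a (possibly nontrivial) power of $2$; so your index agrees with $h_l^-$ only up to a power of $2$. Concretely, for $l=5$ one has $h_5^-=1$, yet every element of $\mathcal{J}_5+(1+J)\Z[\Delta_5]$ has even augmentation (by \cite[Lemma 6.9]{Was97} the group $\mathcal{J}_5$ is spanned by the elements $\sigma_d(c-\sigma_c)\theta$, whose augmentation is $2(c-1)$, while $(1+J)x$ has augmentation $2\varepsilon(x)$). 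Hence $1\notin\mathcal{J}_5+(1+J)\Z[\Delta_5]$, your claimed containment $h_l^-\in\mathcal{J}_l+(1+J)\Z[\Delta_l]$ fails, and the index of your $\mathcal{J}_5^-$ in your $R^-$ is in fact $2$. Since the proposition concerns the full group $\Cl(K(\zeta_l))^{1-J}$ and not merely its odd part, this lost factor of $2$ is a genuine gap: as written, your argument only shows that the exponent divides $2^a h_l^-$ for some $a\ge 0$.

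The repair is short and is what the paper does implicitly: from $[\Z[\Delta_l]^-:\mathcal{J}_l^-]=h_l^-$ one gets $h_l^-(1-J)\in\mathcal{J}_l^-\subseteq\mathcal{J}_l$, so for every class $c$ one has $c^{h_l^-(1-J)}=1$, that is $(c^{1-J})^{h_l^-}=1$; as these elements generate $\Cl(K(\zeta_l))^{1-J}$, its exponent divides $h_l^-$. No passage to the quotient ring, and no decomposition $h_l^-=\alpha+(1+J)\beta$, is needed.
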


\begin{proof}
 In this case we can see the class group $\Cl(K(\zeta_l))$ as a $\Z[\Delta]$-module with the identification $\Gal(\Q(\zeta_l)/\Q)\cong \Gal(K(\zeta_l)/K)$. 
 Moreover we can consider $\Cl(K(\zeta_l))^{1-J}$
as a $\Z[\Delta]^-$-module, where of course
 \begin{equation*}
  \Z[\Delta]^-:=\text{Ker}\left((1+\sigma_{-1}):\Z[\Delta]\longrightarrow\Z[\Delta]\right)=\text{Im}\left((1-\sigma_{-1}):\Z[\Delta]\longrightarrow\Z[\Delta]\right).
 \end{equation*}  
 Iwasawa's class number formula tells us that
 \begin{equation*}
 [\Z[\Delta]^-:\mathcal{J}_l^-]=h^-_{\Q(\zeta_{l^n})},
 \end{equation*}
 where $\mathcal{J}_l^-=\mathcal{J}_l\cap \Z[\Delta]^-$.

By Corollary \ref{leocond} we are done.
\end{proof}

We are more interested in the case when $K$ is totally real. In fact in this situation the automorphism $J$ continues to be a complex conjugation,
as it fixes $K\Q(\zeta_l)^+$, where
$\Q(\zeta_l)^+=\Q(\zeta_l+\zeta_l^{-1})$ is the maximal real subfield of $\Q(\zeta_l)$. Note that $K\Q(\zeta_l)^+$ is the maximal real subfield of $K(\zeta_l)$.
We are more interested in taking the minus part of the odd part:
\begin{equation*}
 \Cl(K(\zeta_l))^-:=\text{Ker}\left((1+J):\Cl(K(\zeta_l))_{odd}\longrightarrow\Cl(K(\zeta_l))_{odd}\right);
\end{equation*}
of course $\Cl(K(\zeta_l))^-$ continues to have a structure of $\Z[\Delta]^-$-module. Together with 
the well-known fact that the order of $\Cl(K(\zeta_l))^-$ is the odd part of $h^-_{K(\zeta_l)}$, this permits us to conclude the following

\begin{corollary}\label{minusresweak}
 If $K$ is a totally real $C_l$-Leopoldt number field disjoint from $\Q(\zeta_l)$, the exponent of 
 $\textup{\Cl}(K(\zeta_l))^-$ (we are taking the minus part of the odd part) divides $h^-_l$.
 In particular the odd primes dividing the relative class number 
 $h^-_{K(\zeta_l)}$ divide $h^-_l$.
\end{corollary}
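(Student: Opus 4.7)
The plan is to deduce this corollary directly from the previous proposition, which asserts that the exponent of $\Cl(K(\zeta_l))^{1-J}=(1-J)\Cl(K(\zeta_l))$ divides $h_l^-$. So the only thing to verify is that the minus part of the odd part, $\Cl(K(\zeta_l))^-$, sits inside $(1-J)\Cl(K(\zeta_l))$, which will force its exponent to divide the exponent of the latter.

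First, since $K$ is totally real and $K\cap \Q(\zeta_l)=\Q$, the automorphism $J\in \Gal(K(\zeta_l)/K)$ identified with $\sigma_{-1}\in\Gal(\Q(\zeta_l)/\Q)$ truly is complex conjugation on $K(\zeta_l)$ (it fixes the maximal totally real subfield $K\Q(\zeta_l)^+$). Therefore $J$ is an involution, and on the odd part $\Cl(K(\zeta_l))_{odd}$, where $2$ is invertible, one has the standard eigenspace decomposition
\begin{equation*}
\Cl(K(\zeta_l))_{odd}=\Cl(K(\zeta_l))_{odd}^{+}\oplus\Cl(K(\zeta_l))^{-},
\end{equation*}
where $\Cl(K(\zeta_l))^-=\ker(1+J)$ on the odd part, by definition.

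Next I compute the image of $1-J$. On $\Cl(K(\zeta_l))_{odd}^+$ the operator $1-J$ vanishes, while on $\Cl(K(\zeta_l))^-$ it acts as multiplication by $2$, which is invertible on a group of odd order. Thus
\begin{equation*}
(1-J)\Cl(K(\zeta_l))\;\supseteq\;(1-J)\Cl(K(\zeta_l))_{odd}\;=\;2\cdot\Cl(K(\zeta_l))^{-}\;=\;\Cl(K(\zeta_l))^{-}.
\end{equation*}
Combining this inclusion with the previous proposition gives that the exponent of $\Cl(K(\zeta_l))^-$ divides the exponent of $(1-J)\Cl(K(\zeta_l))$, which in turn divides $h_l^-$, proving the first assertion.

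For the final claim, recall that the order of $\Cl(K(\zeta_l))^-$ equals the odd part of $h^-_{K(\zeta_l)}$. For any finite abelian group the set of prime divisors of its order coincides with the set of prime divisors of its exponent, so every odd prime dividing $h^-_{K(\zeta_l)}$ divides the exponent of $\Cl(K(\zeta_l))^-$ and hence, by what we just showed, divides $h_l^-$. There is no real obstacle in this argument; the only subtle point is to make sure $J$ really acts as complex conjugation on $K(\zeta_l)$, which is guaranteed by the totally real and disjointness hypotheses.
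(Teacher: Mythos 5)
Your argument is correct and is essentially the one the paper intends: the totally real hypothesis makes $J$ a genuine complex conjugation, the minus part of the odd part lies in $(1-J)\Cl(K(\zeta_l))$ because $1-J$ acts there as the invertible map $2$, and the preceding proposition plus the fact that $|\Cl(K(\zeta_l))^-|$ is the odd part of $h^-_{K(\zeta_l)}$ finishes the claim. No gaps.
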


In the case $l\equiv 3\pmod 4$ we have that $K(\zeta_l)/K(\sqrt{-l})$ is an odd Galois extension of CM-fields,
and by \cite[Theorem 5]{LOO97} we deduce that $h_{K(\sqrt{-l})}^-\mid h_{K(\zeta_l)}^-$.
More precisely (even though forgetting about the prime $2$), we can easily state and prove the following, which was surely already known:

\begin{proposition}\label{oddCM}
 Let $L/K$ be an odd extension of CM-fields. Then the map $N^-_{L/K}:\textup{\Cl}(L)^-\rightarrow \textup{\Cl}(K)^-$
 induced by the norm is surjective. In particular the odd part
 of $h_K^-$ divides the odd part of $h_L^-$.
\end{proposition}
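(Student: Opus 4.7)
The plan is to exploit that $[L:K]$ is odd to obtain surjectivity of the full norm on odd parts of the class groups, and then use $J$-equivariance to restrict to the minus parts.

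First, I would invoke the elementary identity $N_{L/K} \circ j_{L/K} = [L:K]$ on class groups, where $j_{L/K}$ denotes extension of ideals. Since $[L:K]$ is odd, multiplication by $[L:K]$ is an automorphism of $\Cl(K)_{odd}$, so the composition $N_{L/K} \circ j_{L/K}$ already maps onto $\Cl(K)_{odd}$. Consequently $N_{L/K} : \Cl(L)_{odd} \to \Cl(K)_{odd}$ is surjective. This is the input that replaces the ``norm is onto'' statement one would get in a Galois setting.

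Next I would verify that $N_{L/K}$ commutes with complex conjugation. Complex conjugation on any CM-field $F$ is the unique nontrivial element of $\Gal(F/F^+)$, so the conjugation $J_L$ of $L$ restricts to $J_K$ on $K$. For a prime $\mathcal{P}$ of $L$ lying above $\wp$ in $K$, the ideal $J_L(\mathcal{P})$ lies above $J_K(\wp)$ with the same inertia degree, because $J_L$ induces an isomorphism of residue fields. Hence
\[
N_{L/K}(J_L \mathcal{P}) = J_K(\wp)^{f(\mathcal{P}|\wp)} = J_K\bigl(N_{L/K}(\mathcal{P})\bigr),
\]
and by multiplicativity this extends to all of $\Cl(L)$.

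Finally, on the odd parts the element $2$ is invertible, so the idempotents $e^{\pm} = (1 \pm J)/2$ give a direct sum decomposition $\Cl(F)_{odd} = \Cl(F)^+ \oplus \Cl(F)^-$ for both $F = L$ and $F = K$. The $J$-equivariance above shows that $N_{L/K}$ respects this decomposition, so the surjection on odd parts restricts to a surjection $N^-_{L/K} : \Cl(L)^- \twoheadrightarrow \Cl(K)^-$; this is the first claim. For the second, recall (as noted in the excerpt) that $|\Cl(F)^-|$ equals the odd part of $h_F^-$, and surjectivity then yields the divisibility of odd parts. The only step requiring any attention is the $J$-equivariance, which is entirely routine bookkeeping with ramification of primes; the rest is formal.
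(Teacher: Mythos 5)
There is a genuine gap in your first step, and it is the step that carries all the content. From $N_{L/K}\circ j_{L/K}=[L:K]$ you conclude that multiplication by $[L:K]$ is an automorphism of $\Cl(K)_{odd}$ because $[L:K]$ is odd. That inference is false: ``odd'' only makes $[L:K]$ invertible on the $2$-part's complement when $[L:K]$ is coprime to $|\Cl(K)_{odd}|$, and an odd degree can perfectly well share odd prime factors with the class number (e.g.\ $3\mid [L:K]$ and $3\mid h_K$). The identity $N\circ j=[L:K]$ only tells you that $\coker(N_{L/K})$ is killed by $[L:K]$, which is not enough. Worse, the intermediate statement you are aiming for --- surjectivity of $N_{L/K}$ on the \emph{full} odd parts --- is genuinely false in general: by class field theory $\coker(N_{L/K})\cong \Gal(L\cap H_K/K)$, and whenever $L$ meets the Hilbert class field of $K$ nontrivially (which can happen for odd extensions of CM-fields, via the plus part of $\Cl(K)$) this cokernel is a nontrivial group of odd order. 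So no purely formal argument that ignores the CM structure can give surjectivity on $\Cl(\cdot)_{odd}$.

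What actually saves the proposition, and what the paper does, is to compute the cokernel rather than kill it: $\coker(N_{L/K})\cong\Gal(L\cap H_K/K)$ has odd order (as a subquotient of a degree-$[L:K]$ situation), and complex conjugation acts on it trivially, because for CM-fields the conjugation $J$ commutes with all embeddings and hence its conjugation action on this Galois group is trivial. Therefore the cokernel lies entirely in the plus part, $1+J=2$ is injective on it, and $(\coker N_{L/K})^-=\coker(N^-_{L/K})=0$. Your second and third steps --- the $J$-equivariance of the norm via $N_{L/K}(\mathcal{P}^J)=N_{L/K}(\mathcal{P})^J$ and the idempotent splitting $\Cl(F)_{odd}=\Cl(F)^+\oplus\Cl(F)^-$ --- are correct and coincide with the paper's bookkeeping, but you need to replace the first step by the class-field-theoretic identification of the cokernel together with the triviality of the $J$-action on it.
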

\begin{proof}
First of all we describe the norm map between the minus parts: the norm map between ideal class groups
commutes with the conjugation $J$, as for every pair of primes $\mathcal{P}|\wp$ of $L/K$ we have
\begin{equation*}
 N_{L/K}(\mathcal{P})^{J}=\wp^{f(\mathcal{P}|\wp)J}=(\wp^J)^{f(\mathcal{P}|\wp)}=(\wp^J)^{f(\mathcal{P}^J|\wp^J)}=N_{L/K}(\mathcal{P}^J);
\end{equation*}
moreover, $J$ commutes with every immersion of $L$ over $K$ for the same reason for which the conjugation is independent from the immersion in $\C$ of the CM-fields.
So if $[I]^{1+J}=0$, i.e. $I^{1+J}=(x)$, then
\begin{equation*}
 N_{L/K}([I])^{1+J}=N_{L/K}([I]^{1+J})=[N_{L/K}(x)]=0,
\end{equation*}
hence the norm map is also well defined between the minus parts of the odd parts.

 Now, considering also $N_{L/K}:\Cl(L)\rightarrow \Cl(K)$, we can verify that $\coker(N_{L/K})^-=\coker(N^-_{L/K})$.  
 By the well-known commutative the diagram
given by Artin maps, namely
\[\begin{tikzcd}
\textup{\Cl}(E) \arrow{r}{\psi_{H_E/E}} \arrow[swap]{d}{N_{E/F}} & \textup{\Gal}(H_E/E) \arrow{d}{\textup{\text{ restriction}}} \\
\textup{\Cl}(F) \arrow{r}{\psi_{H_F/F}} & \textup{\Gal}(H_F/F)
\end{tikzcd}
\]
for any extension $E/F$ of number fields (see for instance the appendix in \cite{Was97} on class field theory),
 we deduce that $\coker(N_{L/K})\cong \Gal(L\cap H_K/K)$ and it is already of odd order; moreover,
 $J$, which is the complex conjugation, acts in the same way on the two groups. So $J$ acts trivially, i.e. there cannot be any automorphism of $L\cap H_K$
 in the kernel of $1+J$, whence we get $\coker(N^-_{L/K})=0$.
 
\end{proof}

Hence we get:

\begin{corollary}\label{minusres}
 If $K$ is a totally real $C_l$-Leopoldt number field disjoint from $\Q(\zeta_l)$ and $l\equiv 3\pmod 4$,
 the exponent of 
 $\textup{\Cl}(K(\sqrt{-l}))^-$ divides $h^-_l$.
 In particular the odd primes dividing the relative class number 
 $h^-_{K(\sqrt{-l})}$ divide $h^-_l$.
\end{corollary}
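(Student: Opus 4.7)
The plan is to reduce the statement to Corollary \ref{minusresweak} via the odd-degree CM-extension $K(\zeta_l)/K(\sqrt{-l})$ and then apply Proposition \ref{oddCM}. The key input is the classical observation that when $l\equiv 3\pmod 4$, the field $\Q(\sqrt{-l})$ sits inside $\Q(\zeta_l)$ as its unique imaginary quadratic subfield (corresponding to the index-two subgroup of squares in $(\Z/l\Z)^*$). Since $K$ is totally real and disjoint from $\Q(\zeta_l)$, both $K(\sqrt{-l})\subseteq K(\zeta_l)$ are CM-fields, and the restriction of $J$ from $K(\zeta_l)$ to $K(\sqrt{-l})$ sends $\sqrt{-l}\mapsto-\sqrt{-l}$ and so coincides with the complex conjugation on $K(\sqrt{-l})$. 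Thus the two flavours of ``minus part'' are compatible.

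Next I would observe that the degree $[K(\zeta_l):K(\sqrt{-l})]=(l-1)/2$ is odd, precisely because $l\equiv 3\pmod 4$. Hence Proposition \ref{oddCM} applies to the extension $K(\zeta_l)/K(\sqrt{-l})$ and yields a surjection
$$N^-_{K(\zeta_l)/K(\sqrt{-l})}:\textup{\Cl}(K(\zeta_l))^-\twoheadrightarrow\textup{\Cl}(K(\sqrt{-l}))^-.$$
By Corollary \ref{minusresweak}, the exponent of $\textup{\Cl}(K(\zeta_l))^-$ divides $h_l^-$. Since a surjective group homomorphism cannot increase the exponent, the exponent of $\textup{\Cl}(K(\sqrt{-l}))^-$ divides $h_l^-$ as well, which is the first assertion.

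For the ``in particular'' part, the order of $\textup{\Cl}(K(\sqrt{-l}))^-$ is by construction the odd part of $h^-_{K(\sqrt{-l})}$, and for a finite abelian group an odd prime divides the order iff it divides the exponent. Hence every odd prime divisor of $h^-_{K(\sqrt{-l})}$ divides the exponent of $\textup{\Cl}(K(\sqrt{-l}))^-$ and therefore divides $h_l^-$. I do not anticipate any real obstacle here: the whole argument is a concatenation of Corollary \ref{minusresweak}, the embedding $\Q(\sqrt{-l})\subseteq\Q(\zeta_l)$, and Proposition \ref{oddCM}. The one point meriting a brief check is the compatibility of the two ``minus'' structures under the norm, which amounts to the identification of complex conjugations recalled in the first paragraph.
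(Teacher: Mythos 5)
Your proposal is correct and follows exactly the paper's intended route: the corollary is obtained by combining Corollary \ref{minusresweak} with the surjectivity of the norm $N^-_{K(\zeta_l)/K(\sqrt{-l})}$ from Proposition \ref{oddCM}, applicable because $K(\zeta_l)/K(\sqrt{-l})$ is an odd ($(l-1)/2$, as $l\equiv 3\pmod 4$) Galois extension of CM-fields. The compatibility check of the complex conjugations and the passage from exponent to odd prime divisors of $h^-_{K(\sqrt{-l})}$ are exactly the points the paper leaves implicit.
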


This is already sufficient to compute some cases of non-$C_l$-Leopoldt fields, but we can obtain some general information thanks to an idea
by Ichimura in \cite{Ich16}. In particular, our aim is to study totally real normal fields disjoint from $\Q(\zeta_l)$ which are 
$C_l$-Leopoldt, when $l=7,11,19,43,67,163$. The approach is similar to Ichimura's, with two major differences:

\begin{itemize}
 \item we want to study Leopoldt fields and not only Hilbert-Speiser fields (indeed the approach followed by Ichimura is flexible enough to apply also
         to this more general task);
 \item we correct a mistake in the article: in the proof we need the image of the Stickelberger ideal in the group ring given by the quadratic subfield of $\Q(\zeta_l)$
 instead of its own Stickelberger ideal.
\end{itemize}

Let $\delta_l:=\Gal(\Q(\sqrt{-l})/\Q)\cong \Z/2\Z$ with elements $\{1,j\}$.
If $l$ were not ramified in $K$, the extension $K(\zeta_l)/K(\sqrt{-l})$ would satisfy the hypotheses of 
\cite[Theorem 10.1]{Was97}, and so the norm map between 
class groups would be surjective. This is what Ichimura considered (indeed we already know that totally real $C_l$-Hilbert-Speiser fields
are not ramified in $l$,
by \cite{GJ09}), but actually for our purposes we can just assume non-arithmetic disjointness: by the commutative diagram
given by Artin maps, namely
\[\begin{tikzcd}
\textup{\Cl}(E) \arrow{r}{\psi_{H_E/E}} \arrow[swap]{d}{N_{E/F}} & \textup{\Gal}(H_E/E) \arrow{d}{\textup{\text{ restriction}}} \\
\textup{\Cl}(F) \arrow{r}{\psi_{H_F/F}} & \textup{\Gal}(H_F/F)
\end{tikzcd}
\]
for any extension $E/F$ of number fields, we
know that the order of the cokernel of $N:=N_{K(\zeta_l)/K(\sqrt{-l})}$ divides $(l-1)/2$, and for our aims this will be sufficient.

 If we denote by $r:\Z[\Delta_l]\rightarrow \Z[\delta_l]$
 the linear extension of the map $\Delta_l \twoheadrightarrow \delta_l$ given by
         restriction,
it is straightforward that, for $[I]\in\Cl(K(\zeta_l))$ and $\sigma\in\Z[\Delta_l]$, $N([I]^\sigma)=N([I])^{r(\sigma)}$.
This, together with Corollary \ref{leocond}, permits us to conclude the following
\begin{lemma}
  If $K$ is $C_l$-Leopoldt with $l\equiv 3\pmod 4$ then $r(\mathcal{J}_l)$ annihilates a subgroup of $\textup{\Cl}(K(\sqrt{-l}))$ with index dividing $(l-1)/2$. 
\end{lemma}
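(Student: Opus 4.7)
The plan is to apply Corollary \ref{leocond} to $K(\zeta_l)$ and then push forward the annihilation relation along the norm map $N:=N_{K(\zeta_l)/K(\sqrt{-l})}$, using the compatibility displayed just before the lemma.

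More concretely, I would first observe that since $K$ is disjoint from $\Q(\zeta_l)$ (as we are in the context of totally real $C_l$-Leopoldt fields intersecting $\Q(\zeta_l)$ trivially) we may identify $\Gal(K(\zeta_l)/K)\cong\Delta_l$ and view $\textup{\Cl}(K(\zeta_l))$ as a $\Z[\Delta_l]$-module in the classical way. Corollary \ref{leocond} then tells us that $\mathcal{J}_l$ annihilates $\textup{\Cl}(K(\zeta_l))$.

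Next I would invoke the compatibility $N([I]^{\sigma})=N([I])^{r(\sigma)}$ (stated just before the lemma and extended $\Z$-linearly to all of $\Z[\Delta_l]$) to transport this annihilation across $N$: for any $\alpha\in\mathcal{J}_l$ and $[I]\in\textup{\Cl}(K(\zeta_l))$,
\begin{equation*}
    N([I])^{r(\alpha)} \;=\; N([I]^{\alpha}) \;=\; N(0) \;=\; 0,
\end{equation*}
so that $r(\mathcal{J}_l)$ annihilates the image $\im(N)\subseteq \textup{\Cl}(K(\sqrt{-l}))$.

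Finally, to control the index of $\im(N)$ in $\textup{\Cl}(K(\sqrt{-l}))$, I would use the Artin reciprocity diagram reproduced in the paragraph preceding the lemma: it identifies $\coker(N)$ with $\Gal(K(\zeta_l)\cap H_{K(\sqrt{-l})}/K(\sqrt{-l}))$, whose order divides $[K(\zeta_l):K(\sqrt{-l})]=(l-1)/2$. Hence $\im(N)$ is a subgroup of $\textup{\Cl}(K(\sqrt{-l}))$ of index dividing $(l-1)/2$, annihilated by $r(\mathcal{J}_l)$, as required. The proof is essentially an assembly of tools already laid out in the paper; the only point that demands a little attention is making sure that the $\Z[\Delta_l]$-structure on $\textup{\Cl}(K(\zeta_l))$ used in Corollary \ref{leocond} is the same one entering the compatibility formula for $N$, which is why disjointness of $K$ from $\Q(\zeta_l)$ is used in the first step.
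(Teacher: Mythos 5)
Your proposal is correct and follows essentially the same route as the paper: apply Corollary \ref{leocond} to see that $\mathcal{J}_l$ annihilates $\textup{\Cl}(K(\zeta_l))$, transport this along the norm via $N([I]^{\sigma})=N([I])^{r(\sigma)}$, and bound the index of $\im(N)$ by $(l-1)/2$ using the Artin reciprocity diagram (the paper deliberately avoids assuming $l$ unramified in $K$, exactly as you do). The only cosmetic slip is writing the trivial class additively as $N(0)=0$ while the class groups are written multiplicatively elsewhere.
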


Now we want to find useful elements in $r(\mathcal{J}_l)$, in particular rational integers; in fact, if $n\in r(\mathcal{J}_l)\cap \Z$, then by the 
above lemma $n(l-1)/2$ annihilates $\Cl(K(\sqrt{-l}))$ and this will be very useful.

\begin{lemma}
 If $l\equiv 3\pmod 4$ is a prime number, then
 \begin{equation*}
  \frac{l-1}{2}h(-l)=\left(\frac{1}{l}\sum_{\left(\frac{i}{l}\right)=-1}i\right)^2-\left(\frac{1}{l}\sum_{\left(\frac{i}{l}\right)=1}i\right)^2\in r(\mathcal{J}_l)
 \end{equation*}
  where in the sums we are taking the representatives modulo $l$ between $0$ and $l-1$.
\end{lemma}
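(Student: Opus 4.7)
The plan is to combine Dirichlet's analytic class number formula for $\Q(\sqrt{-l})$ with Stickelberger's description of $\mathcal{J}_l$. Let $A$ (resp.\ $B$) be the sum of the quadratic residues (resp.\ non-residues) modulo $l$ in $\{1,\ldots,l-1\}$. I would first settle the stated equality: trivially $A + B = l(l-1)/2$, and Dirichlet's formula (with $w=2$ for $l>3$) gives $h(-l) = (B-A)/l$, so multiplying yields $(B^2 - A^2)/l^2 = (l-1)h(-l)/2$.

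To place this integer in $r(\mathcal{J}_l)$, I would next compute $r(\theta_l)$. Restriction sends $\sigma_i$ to $1$ or to $j$ according as $i$ is a QR mod $l$ or not; since the Legendre symbol is a character, $\sigma_i$ and $\sigma_i^{-1}$ have the same image, so $r(\theta_l) = (A + Bj)/l$. The explicit expressions for $A$ and $B$ above, together with the classical fact that $h(-l)$ is odd for $l\equiv 3 \pmod 4$ with $l>3$, imply $l\mid A$ and $l\mid B$; the mildly surprising outcome is that $r(\theta_l)$ already lies in $\Z[\delta_l]$.

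Then I would invoke Stickelberger: $\mathcal{J}_l = S\cdot\theta_l$ where $S\subseteq\Z[\Delta_l]$ is the ideal generated by the elements $c - \sigma_c$ for $c$ coprime to $l$. Since $r$ is a ring homomorphism (after $\Q$-linear extension), $r(\mathcal{J}_l)$ is the $\Z[\delta_l]$-ideal $r(S)\cdot r(\theta_l)$. A short check establishes $r(S) = \Z[\delta_l]$: taking $c = 1+l$ (a QR) gives $l\in r(S)$, while $c = 2$ produces either $1\in r(S)$ (if $2$ is a QR) or $2-j\in r(S)$, whence $3 = (2-j)(2+j)\in r(S)$; since $\gcd(l,3)=1$ for $l>3$ we deduce $1\in r(S)$. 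Thus $r(\mathcal{J}_l) = r(\theta_l)\cdot\Z[\delta_l]$ is principal.

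The conclusion is then a direct calculation in $\Z[\delta_l]$: multiplying $r(\theta_l) = (A+Bj)/l$ by $(Bj - A)/l\in\Z[\delta_l]$ produces $(A+Bj)(Bj-A)/l^2 = (B^2 - A^2)/l^2 = (l-1)h(-l)/2$, which therefore lies in $r(\theta_l)\cdot\Z[\delta_l] = r(\mathcal{J}_l)$. The main points of care will be the divisibilities $l\mid A$ and $l\mid B$ (which rely on the oddness of $h(-l)$) and the small case analysis yielding $r(S) = \Z[\delta_l]$; both are routine but essential for exposing the principal-ideal structure that makes the final algebraic identity transparent.
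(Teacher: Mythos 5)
Your argument is correct and follows essentially the same route as the paper: factor $\mathcal{J}_l=\mathcal{J}_l'\theta_l$ via \cite[Lemma 6.9]{Was97}, show $1\in r(\mathcal{J}_l')$ so that $r(\mathcal{J}_l)$ is the principal ideal generated by $r(\theta_l)=(A+Bj)/l$, and multiply by $(Bj-A)/l$ to land on $(B^2-A^2)/l^2=\frac{l-1}{2}h(-l)$. The only cosmetic difference is that you obtain $3\in r(\mathcal{J}_l')$ from $c=2$ and the norm identity $(2-j)(2+j)=3$, whereas the paper uses $c=4$ directly; your explicit verification that $l\mid A$ and $l\mid B$ is a point the paper leaves implicit.
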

\begin{proof}
 We know from \cite[Lemma 6.9]{Was97} that $\mathcal{J}=\mathcal{J}'\theta$ (we omit subscripts),
 where $\mathcal{J}'$ is the ideal in $\Z[\Delta]$ generated by the elements of the type $c-\sigma_c$ with $(c,l)=1$.
 We note that  
 \begin{equation*}
  r(\theta)=\frac{1}{l}\sum_{\left(\frac{i}{l}\right)=1}i+\left(\frac{1}{l}\sum_{\left(\frac{i}{l}\right)=-1}i\right)j\in\Z[\delta].
 \end{equation*}
 
 Moreover, $l=l+1-\sigma_{l+1}$ and $r(4-\sigma_4)=3$. So there exist $a,b\in\Z$ such that $r(a(4-\sigma_4)+b(l+1-\sigma_{l+1}))=1$ (we assume $l\neq 3$); therefore
 $r(\theta)\in r(\mathcal{J})$. Multiplying $r(\theta)$ with its conjugate, we obtain the second equality. The first one comes from the class number formula, for example
 explained in \cite[Theorem 4.17]{Was97}.
\end{proof}

Putting the two lemmas together, we finally obtain:

\begin{corollary}\label{res}
 If $l\equiv 3\pmod 4$ is a prime number and $K$ is a $C_l$-Leopoldt field disjoint from $\Q(\zeta_l)$, then
 $\textup{\Cl}(K(\sqrt{-l}))$ has exponent divisible by $\frac{(l-1)^2}{4}h(-l)$. In particular 
 $h_{K(\sqrt{-l})}$ is only divisible by primes dividing
 $\frac{l-1}{2}h(-l)$.
 
 Note that when $h(-l)=1$ we have that $\frac{l-1}{2}h(-l)$ is odd.
\end{corollary}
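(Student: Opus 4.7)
The plan is to combine the two lemmas immediately preceding the corollary in a single short argument. Write $A := \textup{\Cl}(K(\sqrt{-l}))$. The first lemma supplies a subgroup $H\leq A$ with $[A:H]\mid (l-1)/2$ that is annihilated by every element of $r(\mathcal{J}_l)$. The second lemma provides the rational integer
\begin{equation*}
n := \tfrac{l-1}{2}h(-l)\in r(\mathcal{J}_l)\cap \Z.
\end{equation*}
So first I would observe that multiplication by $n$ kills $H$, i.e.\ $nA\subseteq \ker(A\twoheadrightarrow A/H)\cap nA=0$ on $H$, hence $nH=0$.

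Next I would promote this to an annihilator of all of $A$: since $[A:H]$ divides $(l-1)/2$, for every $[x]\in A$ we have $\tfrac{l-1}{2}\cdot[x]\in H$, and applying the previous step then yields
\begin{equation*}
\tfrac{l-1}{2}\cdot n\cdot [x]=\tfrac{(l-1)^2}{4}h(-l)\cdot [x]=0.
\end{equation*}
Thus the exponent of $A$ divides $\tfrac{(l-1)^2}{4}h(-l)$, which is what the first sentence of the corollary asserts (I read ``divisible by'' in the statement as a typo for ``dividing'', since ``divisible by'' would be incompatible with the ``in particular'' clause that follows).

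The ``in particular'' then falls out immediately: every prime divisor of $h_{K(\sqrt{-l})}=|A|$ divides the exponent of $A$, hence divides $\tfrac{(l-1)^2}{4}h(-l)$, and this integer has the same set of prime divisors as $\tfrac{l-1}{2}h(-l)$. Finally, the parenthetical remark is just the observation that for $l\equiv 3\pmod 4$ one has $(l-1)/2$ odd, so if $h(-l)=1$ the integer $\tfrac{l-1}{2}h(-l)$ is odd. There is no real obstacle here; the work was already done in the two lemmas and in Corollary~\ref{leocond}, and this corollary is essentially a packaging step.
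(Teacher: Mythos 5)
Your proposal is correct and follows exactly the route the paper intends: the paragraph before the second lemma already records that any $n\in r(\mathcal{J}_l)\cap\Z$ forces $n\cdot\frac{l-1}{2}$ to annihilate $\textup{\Cl}(K(\sqrt{-l}))$, and the corollary is just the instantiation $n=\frac{l-1}{2}h(-l)$. You are also right that ``divisible by'' in the statement should read ``dividing''; your handling of that, of the passage from exponent to prime divisors of the order, and of the parity remark ($(l-1)/2$ odd since $l\equiv 3\pmod 4$) all match the paper.
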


For a moment we forget about the condition $l=7,11,19,43,67,163$.
The last corollary, modulo the generalized Riemann hypothesis, ensures us that for every $l\equiv 3\pmod 4$ there are
only finitely many number fields $K$ disjoint from $\Q(\zeta_l)$
such that $K(\sqrt{-l})$ is CM (i.e. $K$ is totally real or CM) and $K$ is $C_l$-Leopoldt: in fact Amoroso and Dvornicich in
\cite{AD03} proved that conditionally the exponent of the ideal class group of a CM-field tends to infinity with its discriminant.

We put it on record:
\begin{corollary}
 Assuming the generalized Riemann hypothesis, if $l\equiv 3\pmod 4$ there is only a finite number of totally real and CM fields disjoint from $\Q(\zeta_l)$
 that are $C_l$-Leopoldt.
\end{corollary}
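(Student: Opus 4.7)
The strategy will be a short deduction: use Corollary \ref{res} to bound a class group invariant of $K(\sqrt{-l})$, then apply the Amoroso--Dvornicich result of \cite{AD03} already cited in the paragraph just before the statement.

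First, I would reduce the finiteness question to a bounded discriminant statement for the fields $K(\sqrt{-l})$. By Corollary \ref{res}, whenever $K$ is $C_l$-Leopoldt and disjoint from $\Q(\zeta_l)$, the exponent of $\Cl(K(\sqrt{-l}))$ divides the integer $\frac{(l-1)^2}{4}h(-l)$, a quantity depending only on $l$. In particular, as $K$ varies over the totally real (respectively CM) $C_l$-Leopoldt fields disjoint from $\Q(\zeta_l)$, the exponents of the class groups $\Cl(K(\sqrt{-l}))$ stay uniformly bounded.

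Second, I would check that $K(\sqrt{-l})$ is itself a CM field in both cases. When $K$ is totally real this is immediate: $K(\sqrt{-l})$ is a totally imaginary quadratic extension of the totally real field $K$. When $K$ is CM, write $K = K^{+}(\sqrt{d})$ with $K^{+}$ the maximal totally real subfield and $d \in K^{+}$ totally negative; then $K(\sqrt{-l}) = K^{+}(\sqrt{d},\sqrt{-l})$ is totally imaginary and contains the totally real field $K^{+}(\sqrt{-ld})$ (since $-ld$ is totally positive) as a subfield of index $2$, so $K(\sqrt{-l})$ is CM.

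Third, I would invoke \cite{AD03}: under GRH, the exponent of the ideal class group of a CM field $F$ tends to infinity with $|d_F|$. Combined with the uniform bound from the first step, this forces $|d_{K(\sqrt{-l})}|$ to remain bounded, and Minkowski's discriminant lower bound then bounds $[K(\sqrt{-l}):\Q]$ as well. Hermite's theorem then leaves only finitely many possibilities for $K(\sqrt{-l})$, and hence only finitely many for the subfield $K$. The only mildly subtle point is the CM verification in the second step; everything else is a direct assembly of results already in place.
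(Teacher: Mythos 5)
Your proposal is correct and follows essentially the same route as the paper: the bound on the exponent of $\Cl(K(\sqrt{-l}))$ from Corollary \ref{res} combined with the conditional result of \cite{AD03} on exponents of class groups of CM-fields, plus the standard Minkowski--Hermite finiteness argument. The extra care you take in verifying that $K(\sqrt{-l})$ is CM when $K$ is CM is a detail the paper leaves implicit, and it checks out.
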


If we find conditions for the class number to be $1$ (more precisely, we are going to study the relative class number), we get unconditional
finiteness properties for $C_l$-Leopoldt fields. This is why we assume $l=7,11,19,43,67,163$, as we will see.

\begin{remark}
 We briefly explain why we used the quadratic subfield of $K(\zeta_l)/K$. In Ichimura's article, the author claimed that in our cases $r(\mathcal{J})=\Z[\delta]$ looking
 at Sinnott's result in \cite{Sin80}; however this is not true (for example because the elements that come from the Stickelberger ideal, which in cyclotomic
 fields fits also Sinnott's definition,
 have too big augmentation) and the problem is that Sinnott's result concerns the Stickelberger ideal of the quadratic field,
 and not the the image from the cyclotomic one. So we are unable to prove that $1\in r(\mathcal{J})$, 
 but we do find a certain natural number, and it will be very important to know that it is odd. This 
 would not have been possible if we only considered the cyclotomic extension, because by Lemma 2.1 of Sinnott's article
 the elements of the Stickelberger ideal live in the set
 \begin{equation*}
  A:=\left\{\sigma\in\Z[\Delta]:(1+j)\sigma\in(\sigma_1+...+\sigma_{l+1})\Z\right\}.
 \end{equation*}
 In particular there are no non-zero elements in $\Z$ that are also in the Stickelberger ideal.
\end{remark}

Now we can put together our last result with Corollary \ref{minusres},
so that the case of $h(-l)=1$ will reduce our task to check a finite number of fields.

First of all, looking at any class number table (for example from \cite{Was97}), we note the following

\begin{lemma}
 If $h(-l)=1$, then $\left(h^-_l,\frac{l-1}{2}\right)=1$.
\end{lemma}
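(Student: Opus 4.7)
The plan is to reduce the claim to a finite check. The first ingredient is the classical classification of imaginary quadratic fields of class number one: the primes $l \equiv 3 \pmod 4$ with $h(-l) = 1$ form exactly the list
\[
l \in \{3, 7, 11, 19, 43, 67, 163\}.
\]
For these values one has $(l-1)/2 \in \{1, 3, 5, 9, 21, 33, 81\}$, so the set of rational primes that could obstruct coprimality (i.e.\ primes dividing $(l-1)/2$ for some $l$ in the list) is contained in $\{3, 5, 7, 11\}$.

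Next, for each of the seven primes I would read off $h_l^-$ from a standard table of relative class numbers of cyclotomic fields, such as the one in the appendix of \cite{Was97}, and verify that no prime from the relevant small set divides $h_l^-$. Concretely: for $l \in \{3, 7, 11, 19\}$ one has $h_l^- = 1$ and the assertion is trivial; for $l = 43$, $h_{43}^- = 211$ is prime and distinct from $3$ and $7$; for $l = 67$, $h_{67}^- = 853513 = 67 \cdot 12739$, whose prime factors avoid $\{3, 11\}$; and for $l = 163$, since $(l-1)/2 = 81$ is a power of $3$, it suffices to check that $3 \nmid h_{163}^-$.

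The main (indeed essentially only) possible obstacle is this last case: the value $h_{163}^-$ is large, so one must either trust the tabulated factorization to rule out $3$ as a divisor, or invoke a small piece of theoretical input (for example, the fact that $3$ is a regular prime combined with a Kummer-style congruence restriction on the primes dividing $h_l^-$). Since the tables in \cite{Was97} explicitly record the relevant factorizations, the verification is routine, and the lemma follows by inspection.
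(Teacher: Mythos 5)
Your proof is correct and follows essentially the same route as the paper, whose entire argument is to consult a class number table: the hypothesis $h(-l)=1$ restricts $l$ to the Baker--Heegner--Stark list $\{3,7,11,19,43,67,163\}$, and the coprimality is then verified case by case from the tabulated values of $h_l^-$ in \cite{Was97}. Your explicit reduction to checking that no prime in $\{3,5,7,11\}$ divides the relevant $h_l^-$ merely makes that finite verification transparent.
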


Hence we have finally obtained that:

\begin{theorem}\label{true}
 Let $K$ be a totally real $C_l$-Leopoldt field disjoint from $\Q(\zeta_l)$, where $h(-l)=1$. Then $h_{K(\sqrt{-l})}^-=1$. In particular,
 normal totally real $C_l$-Leopoldt fields unramified in $l$ form a finite set because of \textup{\cite{Odl75}}
 ($K(\sqrt{-l})$ is CM).
\end{theorem}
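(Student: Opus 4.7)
The plan is to combine Corollaries \ref{minusres} and \ref{res} with the preceding gcd lemma: these force every prime dividing $h^-_{K(\sqrt{-l})}$ to lie in a trivial intersection.

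First, I would observe that the hypothesis ``$l$ prime and $h(-l)=1$'' restricts us via the Heegner--Stark classification to $l\in\{3,7,11,19,43,67,163\}$, all congruent to $3\pmod 4$, so both Corollary \ref{minusres} and Corollary \ref{res} are available. Applying Corollary \ref{res} with $h(-l)=1$, every prime divisor of $h_{K(\sqrt{-l})}$ divides $(l-1)/2$, which is odd because $l\equiv 3\pmod 4$; hence $h_{K(\sqrt{-l})}$, and therefore its divisor $h^-_{K(\sqrt{-l})}$, is odd. Corollary \ref{minusres} then says that every odd prime $p$ dividing $h^-_{K(\sqrt{-l})}$ divides $h^-_l$, so such a $p$ would lie in $\gcd\bigl(h^-_l,(l-1)/2\bigr)$; by the preceding lemma this gcd is $1$. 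Hence $h^-_{K(\sqrt{-l})}=1$.

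For the finiteness addendum, assume in addition that $K$ is normal and unramified at $l$. Since $K$ is totally real, $K\cap\Q(\zeta_l)$ sits inside $\Q(\zeta_l)^+$; but $l$ is totally ramified in $\Q(\zeta_l)^+$ (the inertia group at $l$ in $\Gal(\Q(\zeta_l)/\Q)$ is the whole group, and its image in $\Gal(\Q(\zeta_l)^+/\Q)$ still has order $(l-1)/2$), so unramifiedness of $l$ in $K$ forces $K\cap\Q(\zeta_l)=\Q$. The main statement then gives $h^-_{K(\sqrt{-l})}=1$, and $K(\sqrt{-l})$ is a normal CM field (compositum of two normal extensions of $\Q$, totally imaginary over the totally real $K$). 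By Odlyzko's theorem \cite{Odl75} only finitely many normal CM fields have trivial relative class number, so only finitely many $K(\sqrt{-l})$, and hence only finitely many $K$, can arise.

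No substantial obstacle is expected; the theorem is essentially a book-keeping corollary of the machinery already assembled. The only point that requires any thought is the implication ``totally real and unramified at $l$'' $\Rightarrow$ ``disjoint from $\Q(\zeta_l)$'', which rests on the total ramification of $l$ in $\Q(\zeta_l)^+$, together with the fact that the odd gcd from the lemma eliminates all primes at once (once the oddness of $h_{K(\sqrt{-l})}$ has been secured via Corollary \ref{res}).
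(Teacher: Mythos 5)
Your proposal is correct and follows essentially the same route as the paper: combine Corollary \ref{res} (with $h(-l)=1$, forcing $h_{K(\sqrt{-l})}$ to be odd since $(l-1)/2$ is odd for $l\equiv 3\pmod 4$) with Corollary \ref{minusres} and the lemma $\bigl(h_l^-,\frac{l-1}{2}\bigr)=1$, then invoke Odlyzko for finiteness. The paper leaves the reduction to $l\in\{3,7,11,19,43,67,163\}$ and the step ``unramified at $l$ $\Rightarrow$ disjoint from $\Q(\zeta_l)$'' implicit, whereas you spell them out; both are correct.
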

 
Of course we can find from the finite list of $C_l$-Leopoldt fields also the totally real
abelian $C_l$-Hilbert-Speiser fields, and this is a way to adjust Ichimura's result: it is still true that there are finitely many
totally real abelian $C_l$-Hilbert-Speiser fields.

\begin{remark}
 As we have the divisibility $h(-l)\mid h_l^-$, this method does not work if $h(-l)>1$.
\end{remark}

\section{Complete list of real abelian $C_l$-Hilbert-Speiser fields for $l=7,11,19,43,67,163$}\label{hilbert-speiser}
Here we prove that the list of totally real abelian $C_l$-Hilbert-Speiser fields for $l=7,11,19,43,67,163$ given in \cite{Ich16} is actually complete.
We have to study the discrepancy between Theorem \ref{true} and \cite[Corollary]{Ich16}: since the latter simply claims that 
$h_{K(\sqrt{-l})}=1$, we have to check the fields such that $h_{K(\sqrt{-l})}^-=1$ and $h_K=h_{K(\sqrt{-l})^+}\neq1$; moreover, as we are interested in
Hilbert-Speiser fields, we already know that $K$ must be unramified in $l$.

Unfortunately there is no article with a complete table of the imaginary abelian fields with relative class number $1$. The article \cite{CK00} gives the non-cyclic ones,
while the list of cyclic imaginary fields is slightly more difficult to reconstruct: we have to look at the tables in \cite{PK97}, \cite{PK98} and \cite{CK98}.

Here is our analysis:
\begin{itemize}
 \item Imaginary cyclic sextic number fields: the list of sextic fields with relative class number $1$ can be consulted in table $3$ of \cite{PK97}.
 They have the following curious property: every such field $L$ with class number different from $1$ (i.e. $h_{L^+}> 1$) has the same conductor
 of $L^+$. Thus, if $K$ is $C_l$-Hilbert-Speiser and $L=K(\sqrt{-l})$ belongs to this list, $K$ is surely ramified in $l$ since $L$ is. So we do not
 gain anything new.
 \item Imaginary cyclic number fields of $2$-power degree: \cite{PK98} shows that these fields have degree up to $16$ and gives a complete list 
 of the non-quadratic ones (since we can assume $K\supsetneq\Q$, we are not interested in them). From the tables we see that 
 if a cyclic imaginary field of degree $4$, $8$ or $16$ has relative class number $1$, then it has class number $1$.
 \item Other imaginary cyclic number fields: in \cite{CK98} the authors showed that they have degree less or equal than $20$, and they are listed in Table I.
 There is only one such field with class number different from $1$, but both the conductors of $L$ and $K=L^+$ are equal to $91$. Like before, we
 obtain that this is not a new case of a quadratic extension of a Hilbert-Speiser field.
 \item Non-cyclic imaginary abelian number fields: the complete list is given in \cite[Table I]{CK00}. Here there are a lot
 of fields with class number greater than $1$, and unfortunately the authors do not give conductors of the fields, but even in this case
 it is not difficult to conclude that every totally real $C_l$-Hilbert-Speiser $K$ with $K(\sqrt{-l})$ in this table and $h_K>1$ is ramified in $l$.
 In fact almost all the fields $L$ in the table which contain $\sqrt{-l}$ for $l=7,11,19,43,67,163$ are of the form $F(\sqrt{-l})$, with 
 $F$ imaginary and unramified in $l$ (this means that its characters, that can be read from the table, are not all even and have conductors coprime to $l$);
 so their maximal real subfield must be ramified in $l$ by an easy argument. All the exceptions concern $l=7$: $\langle \chi_3,\chi_7^3,\chi_7^4\psi_9\rangle$,
 $\langle \chi_7^3,\chi_3\chi_5^2,\chi_7^2\rangle$ and $\langle \chi_7^3,\chi_7^3\chi_{13}^6,\chi_7^4\chi_{13}^4\rangle$, for which 
 $\langle\chi_7^4\psi_9\rangle$, $\langle\chi_7^2\rangle$ and $\langle\chi_7^4\chi_{13}^4\rangle$ respectively are real and
 hence contained in $K$ and ramified at $7$.
\end{itemize}

\begin{corollary}
 The only real abelian $C_7$-Hilbert-Speiser fields are $\Q(\sqrt{5})$ and $\Q(\sqrt{13})$. The only real abelian $C_{11}$-Hilbert-Speiser field
 is $\Q(\cos(2\pi/7))$. There is no real abelian $C_l$-Hilbert-Speiser field if $l=19,43,67,163$.
\end{corollary}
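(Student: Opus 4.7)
The plan is to combine Theorem~\ref{true}, Ichimura's original enumeration in \cite[Corollary]{Ich16}, and the case-by-case table inspection carried out immediately above. First I would dispose of the disjointness hypothesis: since $K$ is totally real, $K\cap\Q(\zeta_l)\subseteq\Q(\zeta_l)^+$, and every non-trivial subfield of $\Q(\zeta_l)^+$ is (totally) ramified at $l$, while by \cite{GJ09} a totally real $C_l$-Hilbert-Speiser field must be unramified at $l$. Hence $K\cap\Q(\zeta_l)=\Q$ automatically. Moreover $h(-l)=1$ for each $l\in\{7,11,19,43,67,163\}$, so Theorem~\ref{true} applies and yields $h^-_{K(\sqrt{-l})}=1$.

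Because $K$ is totally real, $K(\sqrt{-l})^+=K$ and therefore $h_{K(\sqrt{-l})}=h_K\cdot h^-_{K(\sqrt{-l})}=h_K$. I would then split into the two cases $h_K=1$ and $h_K>1$. If $h_K=1$, then $h_{K(\sqrt{-l})}=1$, so $K(\sqrt{-l})$ appears in Yamamura's classification \cite{Yam94} of imaginary abelian fields with trivial class group; selecting those of the form $K(\sqrt{-l})$ with $K\neq\Q$ real abelian and unramified at $l$ reproduces exactly Ichimura's list, which he already verified consists of $C_l$-Hilbert-Speiser fields. This contributes $\Q(\sqrt{5})$ and $\Q(\sqrt{13})$ for $l=7$, $\Q(\cos(2\pi/7))$ for $l=11$, and no candidates at all for $l\in\{19,43,67,163\}$.

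If instead $h_K>1$, then $K(\sqrt{-l})$ is an imaginary abelian field with $h^-=1$ but $h>1$, and I have to rule out any such $K$ being $C_l$-Hilbert-Speiser. The total list of imaginary abelian fields with $h^-=1$ and $h>1$ is exhausted by the tables of \cite{PK97}, \cite{PK98}, \cite{CK98} and \cite{CK00}, and the bullet-by-bullet analysis in the present section walks through each of those families. In every instance where the listed field actually contains $\sqrt{-l}$, one checks that $K$ acquires ramification at $l$ — either because the conductors of $K(\sqrt{-l})$ and its maximal real subfield $K$ coincide, or, in the handful of non-cyclic exceptions at $l=7$, because among the real characters of $K$ there is one of $7$-power conductor. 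By \cite{GJ09} this contradicts $K$ being a totally real $C_l$-Hilbert-Speiser field, so this regime contributes nothing. The main obstacle is this second case, which reduces to patient bookkeeping with the four tables and, for the exceptional $l=7$ triples, to isolating the ramified real character inside the indicated character group.
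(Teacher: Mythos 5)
Your proposal is correct and follows essentially the same route as the paper: Theorem~\ref{true} together with Ichimura's already-verified list handles the case $h_K=1$, and the same four-table inspection (\cite{PK97}, \cite{PK98}, \cite{CK98}, \cite{CK00}) rules out the regime $h^-_{K(\sqrt{-l})}=1$, $h_K>1$ by forcing ramification at $l$, contradicting \cite{GJ09}. The only quibble is in your description of the $l=7$ exceptions: the relevant real characters $\chi_7^4\psi_9$, $\chi_7^2$, $\chi_7^4\chi_{13}^4$ have conductors $63$, $7$ and $91$ rather than $7$-power conductor, but all the argument needs is that they are ramified at $7$, so nothing breaks.
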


\section{$C_l$-Leopoldt fields that intersect $\Q(\zeta_l)$}\label{intersection}
Until now we only studied the property of being $C_l$-Leopoldt of fields that are disjoint from $\Q(\zeta_l)$. Using the observations
in Remark \ref{leoremark}, we can now concentrate on number fields which intersect $\Q(\zeta_l)$.

Suppose initially that $\zeta_l\in K$. Then the group $\Delta=\Delta_l=\Gal(\Q(\zeta_l)/\Q)$ acts freely on the set of the $l-1$ components of 
\begin{equation*}
 \Cl^0(\mathcal{M})\cong\Cl(K)\times\cdots\times\Cl(K). 
\end{equation*}
Since $l\theta\in\mathcal{J}$ and annihilates $\Cl^0(\mathcal{M})$, for
every $[I]\in \Cl(K)$ we know from Corollary \ref{leocond} that $l\theta$ annihilates 
\begin{equation*}
 ([I],0,...,0)\in\Cl(K)\times\cdots\times\Cl(K).
\end{equation*}
But the first component of $([I],0,...,0)^{l\theta}$ is $[I]$ itself, because the coefficient of $1$ in $l\theta\in\Z[\Delta]$ is $1$. Hence we have obtained:
\begin{proposition}
 Let $K$ be a $C_l$-Leopoldt field that contains $\zeta_l$. Then $\textup{\Cl}(K)=\{1\}$, i.e. $h_K=1$. Of course the converse holds.
\end{proposition}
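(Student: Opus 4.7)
The plan is to apply Corollary \ref{leocond}, which reduces the question to: does $\mathcal{J}$ annihilate $\Cl^0(\mathcal{M})$? With $\zeta_l\in K$ the Wedderburn decomposition of $K[C_l]$ splits into $l$ copies of $K$ indexed by the characters of $C_l$, hence $\mathcal{M}\cong\Oc_K^{l}$. Removing the factor that corresponds to the trivial character (which is killed by the augmentation) gives $\Cl^0(\mathcal{M})\cong\Cl(K)^{l-1}$, and $\Delta$ acts by simple transitive permutation on these $l-1$ components, since this is exactly the case $H=\{1\}$ of Remark \ref{leoremark}.

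First I would locate an element of $\mathcal{J}$ whose coefficient at the identity of $\Delta$ is $1$. The natural candidate is
\begin{equation*}
l\theta=\sum_{g\in\Delta} t(g)\, g^{-1}\in\Z[\Delta]\cap\Z[\Delta]\theta=\mathcal{J};
\end{equation*}
by the definition of $\theta$, the coefficient of $1\in\Delta$ in $l\theta$ equals $t(1)=1$.

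Next, given an arbitrary $[I]\in\Cl(K)$, I would use the freeness of the $\Delta$-action to identify the $l-1$ components of $\Cl^0(\mathcal{M})$ with the elements of $\Delta$, and consider the tuple $x$ with $[I]$ placed in the identity component and zeros elsewhere. Because $\Delta$ permutes the components without fixed points, the contribution to the identity component of $x^{l\theta}$ comes only from the identity coefficient of $l\theta$, and equals $1\cdot[I]=[I]$. Since $l\theta\in\mathcal{J}$ must kill $x$ by Corollary \ref{leocond}, this forces $[I]=0$, proving $h_K=1$. The converse is immediate: if $h_K=1$ then $\Cl^0(\mathcal{M})$ is trivial and there is nothing to annihilate.

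I do not anticipate a serious obstacle here. The only delicate point is making the bookkeeping of the $\Delta$-action on components precise enough that the \emph{identity} coefficient of $l\theta$ is genuinely the one that multiplies the chosen class $[I]$ in the chosen component; everything else follows from the single fact that $l\theta$ has integer coefficient $1$ at the identity.
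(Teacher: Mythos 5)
Your argument is correct and is essentially identical to the paper's proof: both use that $l\theta\in\mathcal{J}$ has coefficient $1$ at the identity of $\Delta$, that $\Delta$ permutes the $l-1$ copies of $\Cl(K)$ freely, and that therefore the identity component of $([I],0,\dots,0)^{l\theta}$ is $[I]$, forcing $[I]=0$ by Corollary \ref{leocond}. No discrepancies to report.
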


A simple consequence of the above result is that there are no CM $C_l$-Leopoldt fields which are odd extensions of $\Q(\zeta_l)$ if $l\geq 23$, by
\cite[Theorem 5]{LOO97} (in odd extensions $E/F$ of CM-fields there is the divisibility $h_F^-\mid h_E^-$). Moreover, assuming the generalized Riemann hypothesis, 
we conclude that there are only finitely many CM $C_l$-Leopoldt fields that contain $\zeta_l$, from \cite{AD03}.

In particular, $\Q(\zeta_l)$ is 
$C_l$-Leopoldt if and only if $2\leq l\leq 19$. We also observe that the last proposition implies \cite[Proposition 3.3]{Her05}, apart from
a finite number of $l$'s.

Now we consider the situation in which $K\cap \Q(\zeta_l)=\Q(\zeta_l)^+=\Q(\zeta_l+\zeta_l^{-1})$. 
Here the decomposition is 
\begin{equation*}
 \Cl^0(\mathcal{M})\cong\Cl(K(\zeta_l))\times\cdots\times\Cl(K(\zeta_l)),
\end{equation*}
where
we have $(l-1)/2$ copies of the ideal class group of $K(\zeta_l)$. From what we have already said in Remark \ref{leoremark}, each
component has a structure of $\Z[H]$-module, where 
\begin{equation*}
 H=\Gal(K(\zeta_l)/K)\cong \Gal(\Q(\zeta_l)/\Q(\zeta_l)^+)=\{1,\sigma_{-1}\}.
\end{equation*}
Since $\pi_H(l\theta)=1+(l-1)\sigma_{-1}$ and $\pi_H((2-\sigma_2))\theta=\sigma_{-1}$, then $\pi_H(l\theta-(l-1)(2-\sigma_2)\theta)=1$ and annihilates
$\Cl(K(\zeta_l))$ if $K$ is $C_l$-Leopoldt. Therefore we have an analogous conclusion:

\begin{proposition}
 Let $K$ be a number field such that $K\cap \Q(\zeta_l)=\Q(\zeta_l)^+$. Then $K$ is $C_l$-Leopoldt if and only if $h_{K(\zeta_l)}=1$.
\end{proposition}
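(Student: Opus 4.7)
The plan is to prove both directions by a direct application of Corollary \ref{leocond} combined with Remark \ref{leoremark}, once one has the key algebraic fact $1\in\pi_H(\mathcal{J})$ for $H=\Gal(K(\zeta_l)/K)=\{1,\sigma_{-1}\}$. The paragraph immediately preceding the proposition already carries out the relevant computation in $\Z[\Delta]$: one checks that $\pi_H(l\theta)=1+(l-1)\sigma_{-1}$ and $\pi_H((2-\sigma_2)\theta)=\sigma_{-1}$, so that
\[
\pi_H\bigl(l\theta-(l-1)(2-\sigma_2)\theta\bigr)=1.
\]
Since $l\theta$ and $(2-\sigma_2)\theta$ both lie in $\mathcal{J}$ (using $l=(l+1)-\sigma_{l+1}\in\mathcal{J}'$ and $(2-\sigma_2)\in\mathcal{J}'$, together with $\mathcal{J}=\mathcal{J}'\theta$), this shows $1\in\pi_H(\mathcal{J})$.

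For the forward direction, assume $K$ is $C_l$-Leopoldt. Under the hypothesis $K\cap\Q(\zeta_l)=\Q(\zeta_l)^+$, the decomposition
\[
\Cl^0(\mathcal{M})\cong\Cl(K(\zeta_l))\times\cdots\times\Cl(K(\zeta_l))\qquad((l-1)/2\text{ factors})
\]
given before the proposition applies, and by Corollary \ref{leocond} the Stickelberger ideal $\mathcal{J}$ annihilates the whole product. As explained in Remark \ref{leoremark}, the coset space $\Delta/H$ permutes the factors freely, while each individual $\Cl(K(\zeta_l))$ is a $\Z[H]$-module in the classical way, and on it $\mathcal{J}$ acts through $\pi_H(\mathcal{J})$. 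Consequently $\pi_H(\mathcal{J})$ annihilates $\Cl(K(\zeta_l))$; combined with $1\in\pi_H(\mathcal{J})$, this forces $\Cl(K(\zeta_l))=0$, that is, $h_{K(\zeta_l)}=1$.

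For the converse, if $h_{K(\zeta_l)}=1$ then $\Cl^0(\mathcal{M})$ is itself trivial, so it is automatically annihilated by every ideal of $\Z[\Delta]$; another application of Corollary \ref{leocond} gives that $K$ is $C_l$-Leopoldt.

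The only real work is verifying $1\in\pi_H(\mathcal{J})$, and that is the main (in fact only) obstacle. The argument is essentially parallel to the analogous step in the previous proposition (the case $\zeta_l\in K$, where $H$ is trivial and one just uses $l\theta$ directly), and proceeds by explicit computation using the formula for $\theta$; the point of the trick $(2-\sigma_2)\theta$ is precisely to cancel the coefficient $(l-1)$ of $\sigma_{-1}$ coming from $\pi_H(l\theta)$ and leave behind a pure $1$.
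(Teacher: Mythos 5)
Your proposal is correct and follows exactly the paper's argument (which appears as the paragraph immediately preceding the proposition rather than in a proof environment): the computation $\pi_H(l\theta-(l-1)(2-\sigma_2)\theta)=1$, the fact that $l\theta,(2-\sigma_2)\theta\in\mathcal{J}=\mathcal{J}'\theta$, and the appeal to Corollary \ref{leocond} and Remark \ref{leoremark} are all the same. Your write-up merely makes explicit why the two elements lie in $\mathcal{J}$ and spells out the trivial converse, both of which the paper leaves implicit.
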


\begin{remark}
 The above results are exactly the generalizations of our criteria about $C_2$ and $C_3$-Leopoldt fields.
 Moreover, modulo the generalized Riemann hypothesis and finitely many cases this result implies \cite[Theorem 1.1]{GJ09},
 when $[K(\zeta_l):K]=2$.
\end{remark}

At this point it is
tempting to look at the case $|H|=3$; and indeed, the conclusion continues to hold,
as explained in the following result:

\begin{proposition}
 Let $l\equiv 1\pmod 3$ be a prime and $K$ a number field with the property that $K\cap \Q(\zeta_l)$ is the subfield of $\Q(\zeta_l)$ with degree $(l-1)/3$
 over $\Q$. Then $K$ is $C_l$-Leopoldt if and only if $h_{K(\zeta_l)}=1$.
\end{proposition}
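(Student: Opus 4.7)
The plan is to mimic the proof for $|H|=2$ that was just given, replacing it with $|H|=3$. By Remark \ref{leoremark}, writing $H := \Gal(K(\zeta_l)/K)$, which under the hypothesis on $K \cap \Q(\zeta_l)$ is identified with the unique order-$3$ subgroup $\{1,\sigma_a,\sigma_{a^2}\}$ of $\Delta_l$ (for some $a \in (\Z/l\Z)^*$ of multiplicative order $3$, which exists as $3\mid l-1$), the field $K$ is $C_l$-Leopoldt if and only if $\pi_H(\mathcal{J}_l)$ annihilates $\Cl(K(\zeta_l))$. Since $\pi_H(\mathcal{J}_l)$ is an ideal of $\Z[H]$, my aim is to produce a unit of $\Z[H]$ inside it; this forces $\pi_H(\mathcal{J}_l) = \Z[H]$ and hence $h_{K(\zeta_l)}=1$, while the converse is automatic.

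First I would record a small input from the arithmetic of cube roots of unity: letting $b, b'$ denote the representatives of $a, a^2$ in $\{2,\ldots,l-1\}$, the identity $1 + a + a^2 \equiv 0 \pmod l$ forces $b + b' = l - 1$, and $b \neq b'$ (else $a=a^2$, i.e.\ $a=1$), so exactly one of $b, b'$ lies in $(l/2, l-1]$.

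The candidate element I would test is $(2 - \sigma_2)\theta_l \in \mathcal{J}_l$. A standard computation with $\theta_l = \frac{1}{l}\sum_c c\,\sigma_c^{-1}$ gives the explicit description
\[
(2 - \sigma_2)\theta_l \;=\; \sum_{\substack{d \in \Delta_l\\ \langle d^{-1}\rangle > l/2}} \sigma_d,
\]
where $\langle x\rangle$ denotes the representative of $x$ modulo $l$ in $\{1,\ldots,l-1\}$. Applying $\pi_H$ retains only the contributions from $d \in \{1, a, a^2\}$, with corresponding test values $1, b', b$; by the previous observation only one of the latter two exceeds $l/2$, so $\pi_H((2-\sigma_2)\theta_l)$ equals $\sigma_a$ or $\sigma_{a^2}$, in either case a unit in $\Z[H]$. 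The only slightly delicate step is deriving the displayed formula (which is the same calculation implicit in the $|H|=2$ case), and I do not foresee any genuine obstacle beyond bookkeeping with cube roots of unity modulo $l$.
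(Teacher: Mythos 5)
Your proof is correct, and it reaches the key identity $\pi_H(\mathcal{J}_l)=\Z[H]$ by a genuinely more direct route than the paper. The paper also works with the elements $(c-\sigma_c)\theta_l$, computing $\pi_H((c-\sigma_c)\theta_l)=\bigl\lfloor\tfrac{cb}{l}\bigr\rfloor\sigma_a+\bigl\lfloor\tfrac{ca}{l}\bigr\rfloor\sigma_b$ for \emph{all} $c$, and then argues by a minimality/contradiction scheme: either some $c$ has $\lfloor ca/l\rfloor\neq\lfloor cb/l\rfloor$, in which case differencing at the minimal such $c$ produces $\sigma_a$ or $\sigma_b$, or else equality holds for every $c$, which at $c=l-1$ forces $a=b$, a contradiction. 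You instead commit to the single element $(2-\sigma_2)\theta_l=\sum_{\langle d^{-1}\rangle>l/2}\sigma_d$ (your displayed formula is correct, being the $c=2$ case of the standard computation $(c-\sigma_c)\theta_l=\sum_d\lfloor c\langle d^{-1}\rangle/l\rfloor\sigma_d$) and exploit the relation $1+a+a^2\equiv 0\pmod l$, giving $b+b'=l-1$ with $b\neq b'$, so exactly one of the two nontrivial coefficients is $1$ and $\pi_H((2-\sigma_2)\theta_l)$ is literally a group element, hence a unit. Your argument is shorter and fully explicit, and it visibly generalizes the computation $\pi_H((2-\sigma_2)\theta_l)=\sigma_{-1}$ used in the $|H|=2$ case; what it costs is that it leans on the specific arithmetic of cube roots of unity ($b+b'=l-1$), whereas the paper's case analysis is agnostic about which $c$ does the job. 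One cosmetic point: Remark \ref{leoremark} only states the implication that $C_l$-Leopoldt forces $\pi_H(\mathcal{J}_l)$ to annihilate $\Cl(K(\zeta_l))$, not the equivalence you assert; but the converse direction of the proposition ($h_{K(\zeta_l)}=1$ implies $C_l$-Leopoldt) follows immediately from Corollary \ref{leocond}, so nothing is lost.
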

\begin{proof}
 In a similar way to the preceding cases, we want to prove that $\pi_H(\mathcal{J})=\Z[H]$. Let $a,b$ be the non-trivial representatives 
 of the third roots of $1$ modulo $l$, i.e. such that $H=\{\sigma_1,\sigma_a,\sigma_b\}$. For every $2\leq c\leq l-1$ we have 
 \begin{equation*}
  \pi_H((c-\sigma_c)\theta)=\left\lfloor\frac{ca}{l}\right\rfloor\sigma_a^{-1}+\left\lfloor\frac{cb}{l}\right\rfloor\sigma_b^{-1}=
  \left\lfloor\frac{cb}{l}\right\rfloor\sigma_a+\left\lfloor\frac{ca}{l}\right\rfloor\sigma_b.
 \end{equation*}
 
 If there exists $2\leq c\leq l-1$ such that $\left\lfloor\frac{ca}{l}\right\rfloor\neq\left\lfloor\frac{cb}{l}\right\rfloor$, taking the minimum $c_m$
 of such values, we notice that $\pi_H((c_m-\sigma_{c_m})\theta-(c_m-1-\sigma_{c_m-1})\theta)$ is $\sigma_a$ or $\sigma_b$,
 since every increase inside the integral part is $a/l<1$ and $b/l<1$ when $c$ rises by $1$, whence we obtain
 that $1\in\pi_H(\mathcal{J})$ as $\pi_H(\mathcal{J})$ is an ideal.
 
 Therefore we may assume that for every $2\leq c\leq l-1$ we have $\left\lfloor\frac{ca}{l}\right\rfloor=\left\lfloor\frac{cb}{l}\right\rfloor$. Taking
 $c=l-1$ this implies $a-1=b-1$, that is $a=b$, which is not possible.
 
\end{proof}

Our considerations about the projection of the Stickelberger ideal are also consequences of \cite[Theorem 2(III)]{IS06},
in a work by Ichimura and Sumida-Takahashi concerning $p$-normal integral bases. According to Lemma 1
of the same article, when $|H]$ is even, then $\pi_H(\mathcal{J})$ is contained in the ideal of $\Z[H]$ generated by $1+\rho+\rho^2+\cdots+\rho^{|H|/2-1}$,
where $\rho$ is any generator of $H$, and so cannot be the whole ring if $|H|\geq 4$ (e.g. look at the augmentation).

If $|H|$ is odd, then \cite[Theorem 2(I)]{IS06} and \cite[Theorem 2(II)]{IS06} imply
that the order of the quotient $\Z[H]/\pi_H(\mathcal{J})$ divides 
\begin{equation*}
 \left[(1+\rho+\rho^2+\cdots+\rho^{(l-1)/2-1})\Z[\Delta]:\pi_\Delta(\mathcal{J})\right]=h_l^-,
\end{equation*}
where $\rho$ generates $\Delta$, so:
\begin{corollary}\label{corollarygain}
 Let $K$ be a $C_l$-Leopoldt field such that the degree $[\Q(\zeta_l):K\cap \Q(\zeta_l)]$ is odd.
 Then the exponent of $\textup{\Cl}(K(\zeta_l))$ divides $h_l^-$. In particular if $3\leq l\leq 19$ then $K$ is $C_l$-Leopoldt if and only if $h_{K(\zeta_l)}=1$.
\end{corollary}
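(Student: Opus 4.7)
The plan is to assemble the corollary from three ingredients that are all in place by this point: the annihilator characterization in Remark \ref{leoremark}, the index estimate quoted from \cite[Theorem 2]{IS06} in the paragraph immediately preceding the statement, and the classical fact that $h_l^- = 1$ for all primes $l$ with $3\le l\le 19$.

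First I would set $H := \Gal(K(\zeta_l)/K)$, which by the assumption of the corollary has odd order $[\Q(\zeta_l):K\cap\Q(\zeta_l)]$. Remark \ref{leoremark} then tells us that if $K$ is $C_l$-Leopoldt, the ideal $\pi_H(\mathcal{J})\subseteq \Z[H]$ annihilates $\Cl(K(\zeta_l))$. The second displayed formula in the paragraph before the corollary records that, under the odd-order hypothesis, the index
\begin{equation*}
 n := [\Z[H]:\pi_H(\mathcal{J})]
\end{equation*}
divides $h_l^-$. Since $\pi_H(\mathcal{J})$ is an ideal and the additive group $\Z[H]/\pi_H(\mathcal{J})$ is finite of order $n$, multiplication by $n$ is zero on this quotient, so $n\in\pi_H(\mathcal{J})$. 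Consequently $n$ annihilates $\Cl(K(\zeta_l))$, and therefore the exponent of $\Cl(K(\zeta_l))$ divides $n$, hence $h_l^-$. This proves the first assertion.

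For the ``in particular'' part, I would simply invoke the standard class number tables (e.g. Washington, Chapter 11): for each prime $l\in\{3,5,7,11,13,17,19\}$ one has $h_l^- = 1$. The first assertion then forces the exponent of $\Cl(K(\zeta_l))$ to divide $1$, so $\Cl(K(\zeta_l))$ is trivial and $h_{K(\zeta_l)}=1$. The reverse implication is automatic: if $\Cl(K(\zeta_l))$ is trivial then $\Cl^0(\mathcal{M})$ is trivial, so $\mathcal{J}$ annihilates it and Corollary \ref{leocond} gives that $K$ is $C_l$-Leopoldt.

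There is really no serious obstacle here, since the hard work has been outsourced to \cite{IS06}; the only point requiring any attention is the passage from ``$\pi_H(\mathcal{J})$ has finite index $n$ in $\Z[H]$'' to ``$n\in\pi_H(\mathcal{J})$'', which is immediate from the fact that every element of an abelian group is annihilated by the group's order. In particular one should note that the odd-order hypothesis on $H$ is essential: as recalled just before the statement, for $|H|$ even $\pi_H(\mathcal{J})$ lies in the proper ideal $(1+\rho+\cdots+\rho^{|H|/2-1})\Z[H]$, so no such finite-index bound exists and the argument collapses.
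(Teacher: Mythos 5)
Your proposal is correct and follows exactly the route the paper intends: the corollary is stated as an immediate consequence of the preceding paragraph, where the index $[\Z[H]:\pi_H(\mathcal{J})]$ is shown (via \cite[Theorem 2]{IS06}) to divide $h_l^-$, combined with the annihilation statement of Remark \ref{leoremark}; the only step the paper leaves implicit is precisely the one you spell out, namely that the index $n$ of an ideal of finite index lies in that ideal, so $n$ (and hence $h_l^-$) annihilates $\textup{\Cl}(K(\zeta_l))$. The ``in particular'' part via $h_l^-=1$ for $3\leq l\leq 19$ and the trivial converse through Corollary \ref{leocond} are also exactly as intended.
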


Remaining in the situation of $|H|$ being odd, we can say something more precise. Now we start from above, and assume $l\equiv 3\pmod 4$ and $|H|=(l-1)/2$, 
i.e. $K\cap\Q(\zeta_l)=\Q(\sqrt{-l})$. In this case Ichimura's class number formula in \cite[Theorem]{Ich06a} tells us that 
$[\Z[H]:\pi_H(\mathcal{J})]=h_l^-/h(-l)$. According to \cite[Theorem 2(II)]{IS06},
thanks to which we know that the index of $\pi_{H_1}(\mathcal{J})$ in $\Z[H_1]$ 
divides that concerning $H_2$ whenever $H_1<H_2$ are of odd order, this permits us to conclude
the following
\begin{corollary}
 Let $K$ be a $C_l$-Leopoldt field such that the degree $[\Q(\zeta_l):K\cap \Q(\zeta_l)]$ is odd, with $l\equiv 3\pmod 4$.
Then the exponent of $\textup{\Cl}(K(\zeta_l))$ divides $h_l^-/h(-l)$. 
\end{corollary}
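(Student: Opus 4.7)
The plan is to combine the annihilation principle from Remark \ref{leoremark} with the index inequality from \cite[Theorem 2(II)]{IS06} and Ichimura's class number formula \cite[Theorem]{Ich06a}, which together are essentially set up by the discussion immediately preceding the statement. Write $H=\Gal(K(\zeta_l)/K)$, identified with a subgroup of $\Delta$ via the canonical isomorphism $\Gal(K(\zeta_l)/K)\cong\Gal(\Q(\zeta_l)/K\cap\Q(\zeta_l))$. The hypothesis that $[\Q(\zeta_l):K\cap\Q(\zeta_l)]=|H|$ is odd, combined with $l\equiv 3\pmod 4$ (so that $(l-1)/2$ itself is odd and $\Delta^2\cong\Gal(\Q(\zeta_l)/\Q(\sqrt{-l}))$ is the unique odd-order index-$2$ subgroup of the cyclic group $\Delta$), forces $H\subseteq\Delta^2$: an odd-order subgroup of a cyclic group cannot meet the unique element of order $2$.

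By Remark \ref{leoremark}, the $C_l$-Leopoldt assumption on $K$ tells us that $\pi_H(\mathcal{J})$ annihilates $\textup{Cl}(K(\zeta_l))$ as a $\Z[H]$-module. Consequently $\textup{Cl}(K(\zeta_l))$ is naturally a module over the finite ring $\Z[H]/\pi_H(\mathcal{J})$, and its additive exponent divides the additive exponent of that ring, which in turn divides its order $[\Z[H]:\pi_H(\mathcal{J})]$.

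It remains to bound this index. Applying \cite[Theorem 2(II)]{IS06} — the monotonicity stating that for odd-order subgroups $H_1<H_2$ of $\Delta$ the index $[\Z[H_1]:\pi_{H_1}(\mathcal{J})]$ divides $[\Z[H_2]:\pi_{H_2}(\mathcal{J})]$ — to the chain $H\subseteq\Delta^2$ gives that $[\Z[H]:\pi_H(\mathcal{J})]$ divides $[\Z[\Delta^2]:\pi_{\Delta^2}(\mathcal{J})]$. Ichimura's class number formula \cite[Theorem]{Ich06a}, used just above in the text, evaluates the latter as $h_l^-/h(-l)$, and stringing the divisibilities together yields the claim.

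The argument is essentially assembly of cited machinery rather than new computation, so the only obstacle is verifying that the setup lands inside the regime where the two external results apply. The key checks are that $H$ has odd order (given), that $H$ is contained in the odd-order subgroup $\Delta^2$ (follows from $l\equiv 3\pmod 4$ plus the cyclicity of $\Delta$), and that annihilation of a module by an ideal translates to the exponent bound — each of which is routine once made explicit.
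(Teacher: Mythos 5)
Your argument is correct and is essentially the paper's own: the text preceding the corollary combines the annihilation of $\textup{Cl}(K(\zeta_l))$ by $\pi_H(\mathcal{J})$ from Remark \ref{leoremark}, Ichimura's formula $[\Z[H_2]:\pi_{H_2}(\mathcal{J})]=h_l^-/h(-l)$ for the odd-order subgroup $H_2$ with $|H_2|=(l-1)/2$ fixing $\Q(\sqrt{-l})$, and the divisibility of indices for odd-order subgroups $H_1<H_2$ from \cite[Theorem 2(II)]{IS06}. Your only additions are to make explicit the (correct) observations that an odd-order $H$ lies in that unique index-two subgroup and that annihilation by the ideal bounds the exponent by the index.
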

In particular, if $l=7,11,19,23$ then $K$ is $C_l$-Leopoldt if and only if $h_{K(\zeta_l)}=1$.
Moreover in a lot of other cases we gain some information on divisibility respect to Corollary \ref{corollarygain},
i.e. we are able to find primes that have the same exponent
in $h(-l)$ and $h_l^-$; for example, the first cases for $l$ are $47$, $59$, $71$, $79$, $103$, $107$, $127$, $151$, $167$, $179$, $191$, $223$, $239$.

When $|H|$ is even there is no hope to obtain in this way an equivalence between being $C_l$-Leopoldt and divisibility properties of $h_{K(\zeta_l)}$:
the product
\begin{equation*}
 (a_0+a_1\rho+\cdots+a_{|H|-1}\rho^{|H|-1})\cdot (1+\rho+\rho^2+\cdots+\rho^{|H|/2-1})
\end{equation*}
cannot be a non-zero rational integer for any $a_0+a_1\rho+\cdots+a_{|H|-1}\rho^{|H|-1}\in\Z[H]$ by an easy polynomial divisibility argument.
Hence we do not find rational integers that annihilate $\Cl(K(\zeta_l))$. However it is possible to get a conclusion in a similar fashion to Corollary
\ref{minusresweak}: since $(1-\rho)(1+\rho+\rho^2+\cdots+\rho^{|H|/2-1})=1-\rho^{|H|/2}$ we have that the index 
$n_H:=[(1+\rho+\rho^2+\cdots+\rho^{|H|/2-1})\Z[H]:\pi_{H}(\mathcal{J})]$ is such that $n_H\Z[H]^-\subseteq \pi_{H}(\mathcal{J})$, where the minus is referred to
``conjugation``, that is the automorphism $J:=\rho^{|H|/2}\in H$ which corresponds to the conjugation $\sigma_{-1}\in\Delta$. Moreover by
Ichimura and Sumida-Takahashi's work $n_H|h_l^-$. Hence:
\begin{proposition}
 Let $K$ be a $C_l$-Leopoldt field such that $H=\Gal(K(\zeta_l)/K)$ has even order. 
 Then $n_H$ annihilates $\textup{\Cl}(K(\zeta_l))^{1-J}$. In particular $h_l^-$ does.
\end{proposition}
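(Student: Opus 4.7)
The plan is to read the proposition as an essentially formal consequence of the setup given in the paragraph that precedes it, combined with Remark \ref{leoremark} (equivalently Corollary \ref{leocond}). The central identity to exploit is
\[
(1-\rho)\bigl(1+\rho+\rho^{2}+\cdots+\rho^{|H|/2-1}\bigr)=1-\rho^{|H|/2}=1-J,
\]
which shows $1-J\in S\,\Z[H]$ for $S:=1+\rho+\cdots+\rho^{|H|/2-1}$. Since $\pi_H(\mathcal{J})\subseteq S\,\Z[H]$ with index $n_H$, multiplying through by $n_H$ yields $n_H\,S\,\Z[H]\subseteq\pi_H(\mathcal{J})$, and in particular $n_H(1-J)\,\Z[H]\subseteq\pi_H(\mathcal{J})$, i.e.\ $n_H\,\Z[H]^{-}\subseteq\pi_H(\mathcal{J})$.

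Next I would invoke Remark \ref{leoremark}: the hypothesis that $K$ is $C_l$-Leopoldt implies that $\pi_H(\mathcal{J})$ annihilates $\Cl(K(\zeta_l))$ as a $\Z[H]$-module. Combining this with the inclusion above, for any class $[I]\in\Cl(K(\zeta_l))$ we obtain
\[
n_H\cdot[I]^{1-J}=[I]^{n_H(1-J)}=0,
\]
because $n_H(1-J)\in\pi_H(\mathcal{J})$. Since $[I]^{1-J}$ ranges over all of $\Cl(K(\zeta_l))^{1-J}$ as $[I]$ varies, this shows that $n_H$ annihilates $\Cl(K(\zeta_l))^{1-J}$, giving the first assertion. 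The \textbf{second} assertion follows immediately from the divisibility $n_H\mid h_l^{-}$, which was recorded just before the proposition as a consequence of Ichimura--Sumida-Takahashi's index computation.

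The argument is really just bookkeeping with the relevant ideals, so there is no genuine obstacle; the only thing worth checking carefully is that the $\Z[H]$-action on $\Cl(K(\zeta_l))$ used in the annihilation step is indeed the one transported by the identification $H\cong\Gal(K(\zeta_l)/K)$, and that the element $J\in H$ matches the complex conjugation on the class group under this identification — both points are exactly what is set up in Remark \ref{leoremark} and the paragraph preceding the proposition. Once these identifications are in place, the displayed chain of inclusions delivers the result in one line.
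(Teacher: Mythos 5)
Your argument is correct and coincides with the paper's own (which appears as the paragraph preceding the proposition rather than as a displayed proof): the identity $(1-\rho)(1+\rho+\cdots+\rho^{|H|/2-1})=1-J$, the inclusion $n_H(1-J)\Z[H]\subseteq\pi_H(\mathcal{J})$ coming from the index $n_H$, the annihilation of $\Cl(K(\zeta_l))$ by $\pi_H(\mathcal{J})$ via Remark \ref{leoremark}, and the divisibility $n_H\mid h_l^-$ from Ichimura--Sumida-Takahashi. No further comment is needed.
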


As in Corollary \ref{minusresweak} we get the following
\begin{corollary}
 With the preceding hypotheses, let us assume that $K$ is totally real. Then the odd primes that divide $h_{K(\zeta_l)}^-$ also divide $h_l^-$.
\end{corollary}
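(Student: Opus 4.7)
The plan is to deduce this as a direct consequence of the preceding proposition by comparing $\textup{\Cl}(K(\zeta_l))^-$ with $\textup{\Cl}(K(\zeta_l))^{1-J}$. First I would observe that, because $K$ is totally real and $l$ is odd, the field $K(\zeta_l)$ is a CM-field with maximal totally real subfield $K\Q(\zeta_l)^+$, and the automorphism $J\in H$, being the image of $\sigma_{-1}\in\Delta$, coincides with complex conjugation on $K(\zeta_l)$. Consequently the subgroup
\begin{equation*}
 \textup{\Cl}(K(\zeta_l))^-=\text{Ker}\bigl((1+J)|_{\textup{\Cl}(K(\zeta_l))_{odd}}\bigr)
\end{equation*}
is well defined, and by the standard analytic formula its order equals the odd part of the relative class number $h_{K(\zeta_l)}^-$.

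Next I would exploit that on $\textup{\Cl}(K(\zeta_l))^-$ the automorphism $J$ acts as $-1$, hence $1-J$ acts there as multiplication by $2$. Since we are inside the odd part of the class group, $2$ is invertible, so $1-J$ is in fact an automorphism of $\textup{\Cl}(K(\zeta_l))^-$. This gives the key inclusion
\begin{equation*}
 \textup{\Cl}(K(\zeta_l))^-\subseteq (1-J)\cdot\textup{\Cl}(K(\zeta_l))=\textup{\Cl}(K(\zeta_l))^{1-J},
\end{equation*}
which is essentially the statement that on the odd part the image of $1-J$ and the kernel of $1+J$ coincide.

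By the preceding proposition $h_l^-$ annihilates $\textup{\Cl}(K(\zeta_l))^{1-J}$, hence it annihilates its subgroup $\textup{\Cl}(K(\zeta_l))^-$. Thus the exponent of $\textup{\Cl}(K(\zeta_l))^-$ divides $h_l^-$, so every prime dividing $|\textup{\Cl}(K(\zeta_l))^-|$, namely every odd prime dividing $h_{K(\zeta_l)}^-$, divides $h_l^-$, as required. The only substantive step, which is hardly an obstacle, is the identification of $\textup{\Cl}(K(\zeta_l))^-$ as a subgroup of $\textup{\Cl}(K(\zeta_l))^{1-J}$ on the odd part; beyond that everything reduces to the previous proposition and the well-known fact that the odd part of the minus class group of a CM-field has order equal to the odd part of its relative class number.
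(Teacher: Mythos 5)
Your proof is correct and follows essentially the same route as the paper, which simply invokes the argument of Corollary \ref{minusresweak}: identify $\textup{\Cl}(K(\zeta_l))^-$ (whose order is the odd part of $h_{K(\zeta_l)}^-$) as a subgroup of $\textup{\Cl}(K(\zeta_l))^{1-J}$ via the invertibility of $1-J$ on the odd minus part, then apply the preceding proposition. Your write-up just makes explicit the details the paper leaves to the reader.
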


Actually, without this generality we can get better results if $|H|<(l-1)/2$ and without assuming $l\equiv3\pmod4$. In fact if
$23\leq l\leq 499$, from \cite[Proposition 3]{IS06} we can read off the few cases in which $|H|<(l-1)/2$ and $\pi_H(\mathcal{J})\subsetneq \Z[H]$:
for instance if $|H|$ is odd we only have 
$l=277$ and $|H|=69$, $l=349$ and $|H|=87$, $l=331$ and $|H|=33$. The authors expected in general that this situation is very rare, i.e. 
we can conclude as well that in most cases a $C_l$-Leopoldt field with $|H|<(l-1)/2$ odd is such that $h_{K(\zeta_l)}=1$. 
\cite[Theorem 2]{Ich06b} gives more evidence about this expectation: if, for $H$ of odd order, there is a prime $q|[\Z[H]:\pi_H(\mathcal{J})]$, then
$h_l^-$ is divisible by $q^{[\Delta:H]/2}$; on the contrary, it is quite common for a cyclotomic relative class number to have rather small exponents.
This is also true in the even case substituting $\Z[H]$ with $(1+\rho+\rho^2+\cdots+\rho^{|H|/2-1})\Z[H]$, but
as we have seen we get a much weaker annihilation/divisibility conclusion.

To sum up: in the context of
$C_l$-Leopoldt fields $K$ that intersect nontrivially with $\Q(\zeta_l)$, if we just use
ad hoc arguments and previous results of Ichimura and Sumida-Takahashi it appears that we are
able to obtain restrictions on the class number of $\Q(\zeta_l)$  only in the
case when $[\Q(\zeta_l):K\cap \Q(\zeta_l)]$ is equal to $2$ or an odd number, and thus finiteness conditions
if we assume the generalized Riemann hypothesis, by \cite{AD03} (without fixing $l$ if $[\Q(\zeta_l):K\cap \Q(\zeta_l)]$ is $1$, $2$ or $3$).
Returning to Hilbert-Speiser fields, from \cite{Her05} we already knew that we
do not have such odd cases of $C_l$-Hilbert-Speiser fields for $l\geq 5$, unless the intersection 
is $\Q(\sqrt{-l})$ when $l\equiv 3\pmod4$. When instead $[\Q(\zeta_l):K\cap \Q(\zeta_l)]$ is even, i.e. $K\cap \Q(\zeta_l)$ is real,
then it is likely that if $K$ is $C_l$-Hilbert-Speiser then the intersection is trivial, by \cite{Ich07}.

\section{Acknowledgements}
 The majority of the results were
obtained in the first author's M.Sc. thesis, written under the joint
supervision of
Ilaria Del Corso at Pisa and the second author.

\end{document}